\providecommand{\U}[1]{\protect\rule{.1in}{.1in}}
\providecommand{\U}[1]{\protect\rule{.1in}{.1in}}
\providecommand{\U}[1]{\protect\rule{.1in}{.1in}}
\numberwithin{equation}{section}
\newtheorem{theorem}{Theorem}[section]
\newtheorem{lemma}[theorem]{Lemma}
\newtheorem{proposition}[theorem]{Proposition}
\newtheorem{rem}[theorem]{Remark}
\newenvironment{proof}[1][Proof]{\noindent\textbf{#1.} }{\ \rule{0.5em}{0.5em}}
\newcommand\LINK[1]{\href{#1}{(link: \nolinkurl{#1})}}
\def\p2{\mathcal A_{\Phi,2\pi}(B)}
\def\0p2{\mathcal A_{\Phi,2\pi}(0)}
\def\sp2{\mathcal A_{\Phi,2\pi,\hbox{\rm SR}}(B)}
\def\beq{\begin{equation}}
\def\ene{\end{equation}}
\def\qed{\ifhmode\unskip\nobreak\fi\ifmmode\ifinner
\else\hskip5pt\fi\fi\hbox{\hskip5pt\vrule width4pt height6pt
depth1.5pt\hskip1pt}}
\def\+out{x^{\rm out}}
\begin{document}

\title{Estimates in the modulation spaces for the Dirac equation with potential}
\author{Keiichi Kato\thanks{Electronic Mail: kato@ma.kagu.tus.ac.jp}\\Department of Mathematics, Faculty of Science, \\Tokyo University of Science, Kagurazaka 1-3,\\Shinjuku-ku, Tokyo 162-8601, Japan. \\and \\Ivan Naumkin\thanks{Electronic Mail: ivan.naumkin@unice.fr} \thanks{Ivan
Naumkin thanks the project ERC-2014-CdG 646.650 SingWave for its financial
support, and the Laboratoire J.A. Dieudonn\'e of the Universit\'e de Nice
Sophia-Antipolis for its kind hospitality.}\\Laboratoire J.A. Dieudonn\'{e}, UMR CNRS 7351,\\Universit\'{e} de Nice Sophia-Antipolis,\\Parc Valrose, 06108 Nice Cedex 02, France.}
\date{}
\maketitle

\begin{abstract}
In the present paper we obtain estimates in the modulation spaces for the
solutions to the Dirac equation with quadratic and sub-quadratic potentials.
We derive a representation for the Dirac operator that permits to solve
approximately the perturbed Dirac equation and to obtain the desired estimates
for the solution.

\end{abstract}

\section{\bigskip Introduction.}

In this paper, we aim to obtain estimates in the modulation spaces for the
solutions to the Cauchy problem for the Dirac equation%

\begin{equation}
\left\{
\begin{array}
[c]{c}%
i\partial_{t}\psi\left(  t,x\right)  =H_{0}\psi\left(  t,x\right)
+\mathbf{V}\left(  t,x\right)  \psi\left(  t,x\right)  ,\text{ \ }%
(t,x)\in\mathbb{R\times R}^{3},\\
\psi\left(  0,x\right)  =\psi_{0}\left(  x\right)  ,\text{ }x\in\mathbb{R}%
^{3},
\end{array}
\right.  \label{Dirac}%
\end{equation}
where $i=\sqrt{-1},$ $\psi\left(  t,x\right)  =\left(  \psi_{1}\left(
t,x\right)  ,\psi_{2}\left(  t,x\right)  ,\psi_{3}\left(  t,x\right)
,\psi_{4}\left(  t,x\right)  \right)  ^{T}\in\mathbb{C}^{4}$ is a four-spinor
field,%
\begin{equation}
H_{0}=-i\alpha\cdot\nabla+m\beta, \label{D1}%
\end{equation}
is the free Dirac operator with $m$ - the mass of the particle, $\alpha
=(\alpha_{1},\alpha_{2},\alpha_{3}),$ and $\alpha_{j},$ $j=1,2,3,4,$ are
$4\times4$ Hermitian matrices that satisfy the relation:%
\[
\alpha_{j}\alpha_{k}+\alpha_{k}\alpha_{j}=2\delta_{jk},\text{ }1\leq
j,k\leq4,
\]
where $\delta_{jk}$ denotes the Kronecker symbol. The standard choice of
$\alpha_{j}$ is (\cite{Thaller}):
\[
\alpha_{j}=%
\begin{pmatrix}
0 & \sigma_{j}\\
\sigma_{j} & 0
\end{pmatrix}
,\text{ \ }1\leq j\leq3,\text{ \ \ \ \ }\alpha_{4}=%
\begin{pmatrix}
I_{2} & 0\\
0 & -I_{2}%
\end{pmatrix}
=\beta,
\]
($I_{n}$ is the $n\times n$ unit matrix) and
\[
\sigma_{1}=%
\begin{pmatrix}
0 & 1\\
1 & 0
\end{pmatrix}
,\sigma_{2}=%
\begin{pmatrix}
0 & -i\\
i & 0
\end{pmatrix}
,\sigma_{3}=%
\begin{pmatrix}
1 & 0\\
0 & -1
\end{pmatrix}
\]
are the Pauli matrices. The potential $\mathbf{V}\left(  t,x\right)  \in
C^{\infty}\left(  \mathbb{R\times R}^{3}\right)  $ is a $\left(
4\times4\right)  $-matrix valued function which entries $\mathbf{V}%
_{jk}\left(  t,x\right)  ,$\ $1\leq j,k\leq4,$ for all multi-indices $\alpha$
with $\left\vert \alpha\right\vert \geq2$ or $\left\vert \alpha\right\vert
\geq1,$ satisfy the estimates%

\begin{equation}
\left\vert \partial_{x}^{\alpha}\mathbf{V}_{jk}\left(  t,x\right)  \right\vert
\leq C_{\alpha},\text{ \ }1\leq j,k\leq4. \label{t23}%
\end{equation}

The usual framework for equation (\ref{Dirac}) is a $L^{2}$ based space, as
for instance, the Sobolev spaces $H^{s}.$ The question arises if it is
possible to remove the $L^{2}$ constraint and consider equation (\ref{Dirac})
in functional spaces which are not $L^{2}$ based. In the case when
$H_{0}=-\Delta$ several approaches were used in order to answer this question.
For example, the local well-posedness of the NLS\ equation was studied on
Zhidkov spaces in \cite{Galo}. Different spaces of infinite mass were
introduced in \cite{Zhou} and \cite{Simao} to study the well-posedness problem
for the NLS\ equation. We also mention the papers \cite{Gerard} and
\cite{Vega} that consider other frameworks which are not $L^{2}$ based.

In general, for a Fourier multiplier $e^{itH},$ the main drawback in working
in Lebesgue spaces $L^{p}$ is that $e^{itH}$ may be unbounded. This means that
the initial properties are not preserved by the time evolution. In the case of
unimodular Fourier multipliers $e^{iH}$ with general symbols $e^{i\left\vert
\xi\right\vert ^{\alpha}},$ where $\alpha\in\lbrack0,2]$, the modulation
spaces $M_{r,s}^{p,q}$ (see Section \ref{S2.3} for the definition of these
spaces) have resulted to be an alternative for the study of $e^{iH}.$ It was
shown in \cite{Benyi2} that such multipliers are bounded on all modulation
spaces, even if they are unbounded on usual $L^{p}$-spaces. There exist a
large literature concerning the modulation spaces and their applications to
the Schr\"{o}dinger equation or other equations, such as the wave equation or
the Klein-Gordon equation. For example, we mention the works \cite{Benyi}%
-\cite{Cordero5}, \cite{Kato1}-\cite{Miyachi}, \cite{Tomita}, \cite{Wang1}%
-\cite{Wei}, and the references cited therein.\ As far as we know, there are
no papers concerning the Dirac equation in the framework of the modulation
spaces. We pretend to fill this gap by proving some estimates in modulation
spaces for the solutions of equation (\ref{Dirac}).

Let us first recall some known results. In the case of the free
Schr\"{o}dinger operator $H_{0}=-\dfrac{1}{2}\Delta$ and $\mathbf{V}\equiv0$,
estimates on the modulation spaces for the solutions to the corresponding
Cauchy problem were obtained in \cite{Benyi2}, \cite{Wang} and \cite{Wang2}%
.\ More precisely, the following was proved. Consider the Schr\"{o}dinger
equation%
\begin{equation}
\left\{
\begin{array}
[c]{c}%
i\partial_{t}u\left(  t,x\right)  =-\dfrac{1}{2}\Delta u\left(  t,x\right)
+V\left(  t,x\right)  u\left(  t,x\right)  ,\text{ \ }(t,x)\in\mathbb{R\times
R}^{N},\\
u\left(  0,x\right)  =u_{0}\left(  x\right)  ,\text{ }x\in\mathbb{R}^{N},
\end{array}
\right.  \label{Shrodinger}%
\end{equation}
where $u\left(  t,x\right)  $ is a complex-valued function of $(t,x)\in
\mathbb{R\times R}^{N},$ $V\left(  t,x\right)  $ is a real-valued function of
$(t,x)\in\mathbb{R\times R}^{N}$ and $u_{0}\left(  x\right)  $ is a
complex-valued function of $x\in\mathbb{R}^{N}.$

\textbf{Theorem A. \ \ }\textit{i) (See \cite{Benyi2}.) Let }$1\leq
p,q\leq\infty.$\textit{ Suppose that }$V\left(  t,x\right)  \equiv0.$\textit{
Then, there exists a positive constant }$C$\textit{ such that }%
\[
\left\Vert u\left(  t,\cdot\right)  \right\Vert _{M^{p,q}}\leq C\left(
1+\left\vert t\right\vert \right)  ^{N/2}\left\Vert u_{0}\right\Vert
_{M^{p,q}},\text{ }u_{0}\in\mathcal{S}\left(  \mathbb{R}^{N}\right)  ,
\]
\textit{for all }$t\in R$\textit{, where }$u\left(  t,x\right)  $\textit{ is
the solution of (\ref{Shrodinger})}$.$

\ \ \ \ \ \ \ \ \ \ \ \ \ \ \ \ \ \ \ \textit{ii) (See \cite{Wang}.) Let
}$2\leq p<\infty,$\textit{ }$1\leq q<\infty,$\textit{ }$1/p+1/p^{\prime}%
=1.$\textit{ Suppose that }$V\left(  t,x\right)  \equiv0.$\textit{ Then, there
exists positive constants }$C$\textit{ and }$C^{\prime}$\textit{ such that}%
\[
\left\Vert u\left(  t,\cdot\right)  \right\Vert _{M^{p,q}}\leq C\left(
1+\left\vert t\right\vert \right)  ^{-N\left(  1/2-1/p\right)  }\left\Vert
u_{0}\right\Vert _{M^{p^{\prime},q}},\text{ }u_{0}\in\mathcal{S}\left(
\mathbb{R}^{N}\right)  ,
\]
\textit{and}%
\[
\left\Vert u\left(  t,\cdot\right)  \right\Vert _{M^{p,q}}\leq C^{\prime
}\left(  1+\left\vert t\right\vert \right)  ^{N\left(  1/2-1/p\right)
}\left\Vert u_{0}\right\Vert _{M^{p,q}},\text{ }u_{0}\in\mathcal{S}\left(
\mathbb{R}^{N}\right)  ,
\]
\textit{for all }$t\in R$\textit{, where }$u\left(  t,x\right)  $\textit{ is
the solution of (\ref{Shrodinger})}$.$

A new representation for the Schr\"{o}dinger operator via the wave packet
transform was derived in \cite{Kato1}-\cite{Kato2} and used to study equation
\textit{(\ref{Shrodinger})}\ in the context of the modulation spaces. In
particular, the results of Theorem A\textbf{ }were proved in \cite{Kato2} by
using this representation. As it was showed in \cite{Kato3} and \cite{Kato4}%
,\textit{ }the approach of \cite{Kato1}-\cite{Kato2} may be applied to study
equation (\ref{Shrodinger}) with quadratic and sub-quadratic potentials to
estimate its solution in the modulation spaces. The following results are due
to \cite{Kato3} and \cite{Kato4}. (Below we emphasize the dependence of the
modulation spaces $M_{\phi}^{p,q}$ on the window $\phi,$ see Section
\ref{S2.3} for more details.)

\textbf{Theorem B. \textit{ }}\textit{(See \cite{Kato3}.) Let }$1\leq
p\leq\infty$\textit{ and }$\phi_{0}\in S\left(  \mathbb{R}^{N}\right)
\setminus\left\{  0\right\}  .$\textit{ Suppose that }$V\left(  t,x\right)
=\pm\frac{1}{2}\left\vert x\right\vert ^{2}.$\textit{ Then, }%
\[
\left\Vert u\left(  t,\cdot\right)  \right\Vert _{M_{\phi\left(
t,\cdot\right)  }^{p,p}}=\left\Vert u_{0}\right\Vert _{M_{\phi_{0}}^{p,p}%
},\text{ }u_{0}\in\mathcal{S}\left(  \mathbb{R}^{N}\right)  ,
\]
\textit{for all }$t\in R,$\textit{ where }$u\left(  t,x\right)  $\textit{ and
}$\phi\left(  t,x\right)  $\textit{ are the solutions of (\ref{Shrodinger})
with }$u\left(  0,x\right)  =u_{0}\left(  x\right)  $\textit{ and }%
$\phi\left(  t,x\right)  =\phi_{0}\left(  x\right)  ,$\textit{ respectively.}

\textbf{Theorem C. \textit{ }}\textit{(See \cite{Kato4}.) i) Let }$1\leq
p\leq\infty$\textit{, }$\phi_{0}\in\mathcal{S}\left(  \mathbb{R}^{N}\right)
\setminus\left\{  0\right\}  $ and $T>0.\ $Set $\phi\left(  t,x\right)
:=e^{it\frac{\Delta}{2}}\phi_{0}.$\textit{ Suppose that }$V\left(  t,x\right)
$ is a real-valued function satisfying (\ref{t23}) for $\left\vert
\alpha\right\vert \geq2.\ $Then,\textit{ there exists a positive constant
}$C_{T}$\textit{ such that}%
\begin{equation}
\left\Vert u\left(  t,\cdot\right)  \right\Vert _{M_{\phi\left(
t,\cdot\right)  }^{p,p}}\leq C_{T}\left\Vert u_{0}\right\Vert _{M_{\phi_{0}%
}^{p,p}},\text{ }u_{0}\in\mathcal{S}\left(  \mathbb{R}^{N}\right)  ,
\label{t4}%
\end{equation}
uniformly for $t\in\lbrack-T,T],$ \textit{where }$u\left(  t,x\right)
$\textit{ is the solution of (\ref{Shrodinger}) in }$C\left(  \mathbb{R}%
;L^{2}\left(  \mathbb{R}^{N}\right)  \right)  $\textit{.}

\ \ \ \ \ \ \ \ \ \ \ \ \ \ \ \ \ \ \ \ \ \ \ ii) \textit{Let }$1\leq
p,q\leq\infty$\textit{, }$\phi_{0}\in S\left(  \mathbb{R}^{N}\right)
\setminus\left\{  0\right\}  $\textit{ and }$T>0.$\textit{ Set }$\phi\left(
t,x\right)  :=e^{it\frac{\Delta}{2}}\phi_{0}.$\textit{ Suppose that }$V\left(
t,x\right)  $\textit{ is a real-valued function satisfying (\ref{t23}) for
}$\left\vert \alpha\right\vert \geq1.$\textit{ Then, there exists a positive
constant }$C_{T}$\textit{ such that}%
\begin{equation}
\left\Vert u\left(  t,\cdot\right)  \right\Vert _{M_{\phi\left(
t,\cdot\right)  }^{p,q}}\leq C_{T}\left\Vert u_{0}\right\Vert _{M_{\phi_{0}%
}^{p,q}},\text{ }u_{0}\in\mathcal{S}\left(  \mathbb{R}^{N}\right)  ,
\label{t35}%
\end{equation}
\textit{uniformly for }$t\in\lbrack-T,T],$\textit{ where }$u\left(
t,x\right)  $\textit{ is the solution of (\ref{Shrodinger}) in }$C\left(
\mathbb{R};L^{2}\left(  \mathbb{R}^{N}\right)  \right)  $\textit{.}

We observe that the approach of \cite{Kato1}-\cite{Kato3} was also used in
\cite{Wei} to prove a result similar to Theorem C for more general
Schr\"{o}dinger-type equations with time-dependent quadratic or sub-quadratic potentials.

\subsection*{Main results.}

We now present our main results. First, we consider the free case, similar to
Theorem A.

\begin{theorem}
\label{TDiracfree}i) Let $1\leq p,q\leq\infty,$ and $T>0.$ Suppose that
$\mathbf{V}\left(  t,x\right)  =0,$ for all $(t,x)\in\mathbb{R\times R}^{3}.$
Then, the solution $\psi\left(  t,x\right)  $ of (\ref{Dirac}) satisfies%
\begin{equation}
\left\Vert \psi\left(  t,\cdot\right)  \right\Vert _{M^{p,q}}\leq
C_{T}\left\Vert \psi_{0}\right\Vert _{M^{p,q}},\text{ }\psi_{0}\in
\mathcal{S}\left(  \mathbb{R}^{N}\right)  , \label{t54}%
\end{equation}
uniformly for $t\in\lbrack-T,T]$.

ii) Let $0<q<\infty.$ Suppose that $\mathbf{V}\left(  t,x\right)  =0,$ for all
$(t,x)\in\mathbb{R\times R}^{3}.$ Then
\begin{equation}
\left\Vert \psi\left(  t,\cdot\right)  \right\Vert _{M_{0,-2\sigma}^{p,q}}\leq
C\left\langle t\right\rangle ^{-\theta\lbrack3/2-3/p]}\left\Vert \psi
_{0}\right\Vert _{M^{p^{\prime},q}},\text{ }\psi_{0}\in\mathcal{S}\left(
\mathbb{R}^{N}\right)  ,\text{\ } \label{t53}%
\end{equation}
where $2\leq p<\infty,$ $\frac{1}{p}+$\ $\frac{1}{p^{\prime}}=1,$ $\theta
\in\lbrack0,1]$ and $2\sigma=5\theta\left(  \frac{1}{2}-\frac{1}{p}\right)  .$
\end{theorem}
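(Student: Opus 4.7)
\medskip
\noindent\textbf{Proof proposal.} The plan is to diagonalize the free Dirac operator and thereby reduce both assertions to dispersive properties of the Klein--Gordon half-wave propagator. By the Clifford relations among the Dirac matrices one has $H_{0}^{2}=(-\Delta+m^{2})I_{4}$, so setting $\langle D\rangle_{m}:=\sqrt{-\Delta+m^{2}}$ and
\[
\Lambda_{\pm}:=\tfrac{1}{2}\bigl(I_{4}\pm H_{0}\langle D\rangle_{m}^{-1}\bigr)
\]
yields a pair of matrix-valued Fourier multipliers satisfying $\Lambda_{+}+\Lambda_{-}=I_{4}$, $\Lambda_{+}\Lambda_{-}=0$, and $H_{0}\Lambda_{\pm}=\pm\langle D\rangle_{m}\Lambda_{\pm}$. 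Consequently
\[
e^{-itH_{0}}=e^{-it\langle D\rangle_{m}}\Lambda_{+}+e^{it\langle D\rangle_{m}}\Lambda_{-}.
\]
The symbols of $\Lambda_{\pm}$ are smooth, bounded and homogeneous of degree zero outside any neighborhood of the origin, so they define bounded operators on every (weighted) modulation space. It therefore suffices to estimate the scalar propagators $e^{\mp it\langle D\rangle_{m}}$.

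For part i), the phase $\pm t\sqrt{|\xi|^{2}+m^{2}}$ has all derivatives of order $\geq 2$ uniformly bounded in $\xi$ and in $t\in[-T,T]$. Thus $e^{\mp it\langle D\rangle_{m}}$ is a unimodular Fourier multiplier of sub-quadratic type, and the boundedness theorem for such multipliers on modulation spaces (see \cite{Benyi2}) gives $\|e^{\mp it\langle D\rangle_{m}}f\|_{M^{p,q}}\leq C_{T}\|f\|_{M^{p,q}}$ uniformly for $t\in[-T,T]$; combined with the boundedness of $\Lambda_{\pm}$ this is (\ref{t54}).

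For part ii), the key ingredient is the frequency-localized dispersive inequality: for $k\in\mathbb{Z}^{3}$ and $f$ with $\widehat{f}$ supported in the unit cube $k+[-1/2,1/2]^{3}$,
\[
\|e^{\mp it\langle D\rangle_{m}}f\|_{L^{p}}\leq C(1+|t|)^{-3(1/2-1/p)}\langle k\rangle^{5(1/2-1/p)}\|f\|_{L^{p'}},
\]
which follows by complex interpolation between the $L^{2}$ isometry and the classical $L^{1}\to L^{\infty}$ Klein--Gordon dispersive estimate in $\mathbb{R}^{3}$, the latter carrying the characteristic $(N+2)/2=5/2$ derivative loss produced by the Hessian of $\sqrt{|\xi|^{2}+m^{2}}$. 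Inserting this bound into the short-time Fourier transform definition of the modulation norm, the factor $\langle k\rangle^{5(1/2-1/p)}$ is absorbed by the frequency weight $\langle\xi\rangle^{-2\sigma}$ on the left with $2\sigma=5(1/2-1/p)$, yielding the endpoint $\theta=1$ of (\ref{t53}). The endpoint $\theta=0$ is the embedding $M^{p',q}\hookrightarrow M^{p,q}$ valid for $p\geq 2$, and complex interpolation in $\theta$ between the two endpoints then produces the entire family.

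The main obstacle is obtaining the Klein--Gordon dispersive estimate with the sharp $5/2$-derivative loss uniformly over frequency cubes and verifying that the reassembly of the $\ell^{q}$-sum over $k$ in the modulation norm balances exactly the prescribed weight; any mismatch between the cube-localized estimate and the full modulation norm has to be absorbed by Bernstein-type bounds on the short-time Fourier transform. The boundedness of the matrix multipliers $\Lambda_{\pm}$ on the weighted modulation spaces involved is routine once the Schur-type off-diagonal decay of their symbols against the modulation-space Gabor frame is established, with the low-frequency singularity of $\langle D\rangle_{m}^{-1}$ rendered harmless by the mass $m>0$.
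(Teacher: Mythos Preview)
Your strategy for part ii) coincides with the paper's: project via $\Lambda_{\pm}=\mathbf{P}_{\pm}$, reduce to the scalar Klein--Gordon propagator $e^{\mp it\langle D\rangle_{m}}$, then recombine using $\Lambda_{+}+\Lambda_{-}=I$ and the boundedness of $\Lambda_{\pm}$ on modulation spaces. The only difference is that the paper simply quotes Wang--Hudzik's ready-made Klein--Gordon modulation estimate (Proposition~4.2 of \cite{Wang}, reproduced as (\ref{t49})) with the interpolation parameter $\theta\in[0,1]$ already built in, whereas you sketch its proof via cube-localized dispersion and interpolation in $\theta$; that is the same underlying mathematics. One small imprecision: your $\theta=0$ endpoint is not literally the static embedding $M^{p',q}\hookrightarrow M^{p,q}$, since the left-hand side involves $\psi(t)$ rather than $\psi_{0}$; you need $e^{\mp it\langle D\rangle_{m}}$ bounded $M^{p',q}\to M^{p,q}$ \emph{uniformly in $t\in\mathbb{R}$}, which follows because on each unit frequency cube the phase splits as a $(t,k)$-dependent affine part plus a remainder with all $\xi$-derivatives bounded independently of $t$ and $k$.

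For part i) you take a more direct route than the paper: you invoke the B\'enyi--Gr\"ochenig--Okoudjou--Rogers unimodular multiplier theorem on $e^{\mp it\langle D\rangle_{m}}$, using that the second $\xi$-derivatives of $t\sqrt{|\xi|^{2}+m^{2}}$ are bounded by $C|t|$ (hence $C_{T}$). The paper instead deduces (\ref{t54}) as the special case $\mathbf{V}\equiv 0$ of its Theorem~\ref{TDirac}(ii), proved via the wave packet representation (\ref{t1}), the integral equation (\ref{t5}), and Gronwall. Your argument is shorter and self-contained for the free equation; the paper's route has the advantage of handling free and perturbed cases by a single mechanism, so that part i) comes for free once the perturbed theorem is established.
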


Now, we present the estimates analogous to Theorem C for the equation
(\ref{Dirac}) with quadratic and sub-quadratic potentials.

\begin{theorem}
\label{TDirac}i) Let $1\leq p\leq\infty,$ and $T>0.$ Suppose that the
potential in problem (\ref{Dirac}) decomposes as $\mathbf{V}\left(
t,x\right)  =Q\left(  t,x\right)  I_{4}+\mathbf{V}_{2}\left(  t,x\right)  ,$
where $Q\left(  t,x\right)  \in C^{\infty}\left(  \mathbb{R}^{3}%
\times\mathbb{R}^{3}\right)  $ is a real-valued function satisfying
(\ref{t23}) for $\left\vert \alpha\right\vert \geq2$ and $\mathbf{V}%
_{2}\left(  t,x\right)  $ is a $\left(  4\times4\right)  -$matrix-valued
function whose components verify (\ref{t23}) for $\left\vert \alpha\right\vert
\geq0.$ Then,
\begin{equation}
\left\Vert \psi\left(  t,\cdot\right)  \right\Vert _{M^{p,p}}\leq
C_{T}\left\Vert \psi_{0}\right\Vert _{M^{p,p}},\text{ }\psi_{0}\in
\mathcal{S}\left(  \mathbb{R}^{N}\right)  , \label{t55}%
\end{equation}
uniformly for $t\in\lbrack-T,T].$

ii) Let $1\leq p,q\leq\infty,$ and $T>0.$ Suppose that $\mathbf{V}\left(
t,x\right)  =\mathbf{V}_{1}\left(  t,x\right)  +\mathbf{V}_{2}\left(
t,x\right)  ,$ where $\mathbf{V}_{1}\left(  t,x\right)  =\{\left(
\mathbf{V}_{1}\right)  _{jk}\left(  t,x\right)  \},$ $1\leq j,k\leq4,$ is a
matrix-valed function with $\left(  \mathbf{V}_{1}\right)  _{jk}\left(
t,x\right)  \in C^{\infty}\left(  \mathbb{R\times R}^{3}\right)  $ which
entries satisfy (\ref{t23}) for $\left\vert \alpha\right\vert \geq1\ $and
$\mathbf{V}_{2}\left(  t,x\right)  $ is a $\left(  4\times4\right)
-$matrix-valued function whose components verify (\ref{t23}) for $\left\vert
\alpha\right\vert \geq0.$ Then,
\begin{equation}
\left\Vert \psi\left(  t,\cdot\right)  \right\Vert _{M^{p,q}}\leq
C_{T}\left\Vert \psi_{0}\right\Vert _{M^{p,q}},\text{ }\psi_{0}\in
\mathcal{S}\left(  \mathbb{R}^{N}\right)  , \label{t56}%
\end{equation}
uniformly for $t\in\lbrack-T,T].$
\end{theorem}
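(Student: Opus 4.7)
The plan is to adapt the wave packet transform method of \cite{Kato1, Kato2, Kato3, Kato4} from the scalar Schrödinger setting to the matrix-valued Dirac setting, using the representation of the Dirac operator advertised in the abstract. The starting observation is that the principal symbol $\alpha\cdot\xi+m\beta$ of $H_0$ has eigenvalues $\lambda_\pm(\xi)=\pm\sqrt{|\xi|^2+m^2}$ with smooth orthogonal projections $P_\pm(\xi)$ onto the energy eigenspaces, so that $e^{-itH_0}$ acts as a scalar Fourier multiplier $e^{-it\lambda_\pm(\xi)}$ on each component $P_\pm(D)\psi$. On each such component, the machinery of \cite{Kato3, Kato4} can be run with the Schrödinger symbol $|\xi|^2/2$ replaced by $\lambda_\pm(\xi)$, and this drives the whole construction.

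For part i), since $Q(t,x)I_4$ commutes with every Dirac matrix, I would write, on each energy subspace, a parametrix of Fourier integral operator type
\[
(U_\pm(t,0)f)(x)=\int e^{i\varphi_\pm(t,x,\xi)}\,a_\pm(t,x,\xi)\,\hat f(\xi)\,d\xi + R_\pm(t)f(x),
\]
where $\varphi_\pm$ solves the Hamilton--Jacobi equation
\[
\partial_t\varphi_\pm + \lambda_\pm(\nabla_x\varphi_\pm) + Q(t,x)=0,\qquad \varphi_\pm(0,x,\xi)=x\cdot\xi,
\]
$a_\pm$ solves the corresponding transport equation along the Hamiltonian flow, and $R_\pm(t)$ is a smoothing remainder. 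Because $Q$ is sub-quadratic in $x$ and $\lambda_\pm$ has uniformly bounded Hessian away from the origin, the associated canonical transformation is well-defined with uniformly bounded Jacobian on $[-T,T]$, exactly as in \cite{Kato3}; combined with the wave packet transform characterization of $M^{p,p}$, this yields (\ref{t55}) for the propagator of $H_0+QI_4$. The bounded matrix perturbation $\mathbf{V}_2$ is then absorbed via Duhamel's formula, using that pointwise multiplication by an $L^\infty$ matrix-valued symbol with bounded derivatives is a bounded operator on $M^{p,p}$, and Gronwall's inequality closes the bound on $[-T,T]$.

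For part ii), I would repeat the same construction with $\mathbf{V}_1(t,x)$ in place of $Q(t,x)I_4$. The difference is that $\mathbf{V}_1$ is matrix-valued and need not commute with the Dirac matrices, so the transport system for the amplitude becomes a first-order coupled system between the $\pm$-energy components, and only the linear-in-$x$ part of $\mathbf{V}_1$ can be absorbed into the phase. This is the same mechanism by which Theorem C(ii) settles for $M^{p,q}$ rather than $M^{p,p}$ under the weaker derivative hypothesis $|\alpha|\geq 1$. Once the parametrix for $H_0+\mathbf{V}_1$ is in place, $\mathbf{V}_2$ is again treated by Duhamel's formula and Gronwall's inequality.

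The main obstacle is the matrix-valued nature of $H_0$: the parametrix construction must simultaneously perform the block-diagonalization $H_0 = \lambda_+(D)P_+(D) + \lambda_-(D)P_-(D)$ and solve the Hamilton--Jacobi and transport equations within each block, while controlling the off-diagonal couplings produced by the commutators $[P_\pm(D),\mathbf{V}_1]$ and $[P_\pm(D),\mathbf{V}_2]$ (which are absent in the scalar Schrödinger setting of \cite{Kato3, Kato4}) so that they can be absorbed perturbatively by the Duhamel iteration without losing the uniform-in-$t$ modulation-space estimate.
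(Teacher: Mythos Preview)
Your plan diverges from the paper's proof in a fundamental way, and the divergence introduces a genuine gap.

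The paper does \emph{not} diagonalize $H_0$ via the spectral projections $P_\pm(D)$, does not construct an FIO parametrix, and does not solve any Hamilton--Jacobi equation. Instead it applies the wave packet transform $W_\phi$ directly to the equation with the full matrix-valued symbol $a(\xi)=\alpha\cdot\xi+m\beta$. The key algebraic fact is that $a(\xi-\eta)-a(\xi)=-\alpha\cdot\eta$ is \emph{independent of $\xi$}; this makes the remainder $R_0$ in the identity $W_\phi(a(D)u)(x,\xi)=a(\xi)W_\phi u(x,\xi)+R_0u(x,\xi)$ a harmless integral operator (Lemma~\ref{L2}). After Taylor-expanding $Q(t,y)$ around $x$, the transformed equation is a first-order system in $(t,\xi)$ with $x$ as a parameter, solved by characteristics $g(s;t,x,\xi)=\xi-\int_t^s\nabla Q(\tau,x)\,d\tau$; the matrix $a(\xi)$ sits only in the (unitary) propagator and disappears upon taking norms. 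Young's inequality and Gronwall finish the job. Part ii) is even simpler: since $|\alpha|\ge 1$ suffices, only a first-order Taylor expansion of $\mathbf V_1$ is needed, no $\nabla_\xi$-transport arises, and one closes in $M^{\infty,1}$, $M^{1,\infty}$, $M^{p,p}$ separately and interpolates.

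Your route via $P_\pm(D)$ has a concrete obstruction in part i). You write that ``$Q(t,x)I_4$ commutes with every Dirac matrix'' and then pass to each energy subspace; but what is needed is commutation with the \emph{pseudodifferential} projector $P_\pm(D)$, and that fails. The commutator $[P_\pm(D),Q]$ has principal symbol $\nabla_\xi P_\pm(\xi)\cdot\nabla_x Q(t,x)$, and under the hypothesis of part i) only $\partial^\alpha Q$ with $|\alpha|\ge 2$ is bounded, so $\nabla_x Q$ may grow linearly in $x$. The resulting off-diagonal coupling is therefore an operator with symbol of type $\langle\xi\rangle^{-1}\langle x\rangle$, which is \emph{not} bounded on $M^{p,p}$ and cannot be absorbed by a Duhamel/Gronwall iteration without loss. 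This is precisely the difficulty your last paragraph flags but does not resolve; the paper sidesteps it entirely by never splitting into $\pm$ components. Separately, the assertion that the Hamilton--Jacobi/canonical-transformation machinery for $\lambda_\pm(\xi)=\pm\sqrt{|\xi|^2+m^2}$ with a quadratic $Q$ works ``exactly as in \cite{Kato3}'' would require its own justification: \cite{Kato3,Kato4} exploit the specific quadratic structure of $|\xi|^2/2$, and the Klein--Gordon phase does not share it.
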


\begin{rem}
Here are some comments on Theorems \ref{TDiracfree} and \ref{TDirac}.

\begin{enumerate}
\item For definiteness, we choose the space dimension to be $N=3.$ Theorems
\ref{TDiracfree} and \ref{TDirac} remain valid for the Dirac equation in any dimension.

\item We note that we prove Theorem \ref{TDirac} for more general equations
(\ref{eq1}). See Theorems \ref{T1} and \ref{T2}.

\item We expect that in general it is not possible to replace the $M^{p,p}%
$-norm in (\ref{t55}) by the $M^{p,q}$-norm$.$ In the case of the
Schr\"{o}dinger equations it is known that (\ref{t4}) might be false if $p\neq
q.$ Indeed, let $u\left(  t,x\right)  $ solves (\ref{Shrodinger}) with
$V\left(  t,x\right)  =\frac{1}{2}\left\vert x\right\vert ^{2}.$ Then (see
\cite{Kato1})
\[
\left\Vert u\left(  \frac{\pi}{2},\cdot\right)  \right\Vert _{M_{\phi\left(
\frac{\pi}{2},\cdot\right)  }^{p,q}}=\left\Vert \left\Vert W_{\phi_{0}}%
u_{0}\left(  \xi,x\right)  \right\Vert _{L_{x}^{p}}\right\Vert _{L_{\xi}^{q}%
}.
\]
However, $\left\Vert \left\Vert W_{\phi_{0}}u_{0}\left(  \xi,x\right)
\right\Vert _{L_{x}^{p}}\right\Vert _{L_{\xi}^{q}}\leq C\left\Vert \left\Vert
W_{\phi_{0}}u_{0}\left(  x,\xi\right)  \right\Vert _{L_{x}^{p}}\right\Vert
_{L_{\xi}^{q}}$ it is not true in general.

\item We observe that in Theorem \ref{TDirac} the potential $\mathbf{V}%
_{2}\left(  t,x\right)  $ is a general $\left(  4\times4\right)
-$matrix-valued function, not necessarily Hermitian.

\item In the second part of Theorem \ref{TDirac} the potential $\mathbf{V}%
_{1}\left(  t,x\right)  $ is a general Hermitian matrix. Therefore, it
includes the case of the electromagnetic potential%
\[
\ \mathbf{V}_{1}\left(  t,x\right)  =%
\begin{pmatrix}
Q_{+}\left(  t,x\right)  & \alpha\cdot A\left(  t,x\right) \\
\alpha\cdot A\left(  t,x\right)  & Q_{-}\left(  t,x\right)
\end{pmatrix}
,
\]
where $A\left(  t,x\right)  =\left(  A_{1}\left(  t,x\right)  ,A_{2}\left(
t,x\right)  ,A_{3}\left(  t,x\right)  \right)  ,$ and $Q_{\pm},A_{j}\in
C^{\infty}\left(  \mathbb{R\times R}^{3}\right)  ,$ $j=1,2,3,$ satisfy
(\ref{t23}) for $\left\vert \alpha\right\vert \geq1.$
\end{enumerate}
\end{rem}

\textbf{Comments on the proof. }Our proof is based on the strategy developed
in the papers \cite{Kato1}-\cite{Kato3}. As a first step, we need to derive a
representation for the Dirac operator that permits to transform the Dirac
equation (\ref{Dirac}) into a system of ordinary differential equations as the
transform obtained in \cite{Kato2} for the Schr\"{o}dinger equation. Since the
free Dirac equation is a system of coupled equations, it seems impossible to
obtain such a representation for the Dirac operator. Fortunately for us, it is
enough to obtain approximate representation that, to the main order, transform
the original equation (\ref{Dirac}) into a new one that can be solved
"approximately" (see (\ref{t1}), (\ref{t3}) and (\ref{t5}) below). Then, using
the integral equation (\ref{t5}) for the solution to this transformed
equation, we are able to obtain the desired estimates for the solution of
(\ref{Dirac}). Since we use an approximate representation, the window $\phi$
in (\ref{t55}) and (\ref{t56}) is fixed, while in (\ref{t4})\ and (\ref{t35})
$\phi\left(  t,x\right)  $ is asked to solve the free Schr\"{o}dinger
equation. We observe that in order to obtain the integral representation
(\ref{t5}), we restrict ourselves in (\ref{t55}) to diagonal quadratic
potentials $Q\left(  t,x\right)  I_{4}$.

The rest of the paper is organized as follows. In Section 2, we introduce some
notation, we recall some definitions and properties of the vector-valued
modulation spaces and we present some known results for the Dirac operator.
Section 3 is dedicated to the proof of our main results and it is divided in
two parts: in the first part we prove Theorem \ref{TDiracfree}, whilst in the
second part, we present the proof of Theorem \ref{TDirac}.

\section{Preliminaries.}

\subsection{Notation.}

Let $N,m\geq1$ be entire. For $1\leq p<\infty$ we denote by $L^{p}\left(
\mathbb{R}^{N};\mathbb{C}^{m}\right)  $ the Lebesgue spaces of $\mathbb{C}%
^{m}$-vector valued functions. Also, we introduce the weighted $L^{2}\left(
\mathbb{R}^{N};\mathbb{C}^{m}\right)  $ spaces for $s\in\mathbb{R},$
$L_{s}^{2}:=\{f:\left\langle x\right\rangle ^{s}f\left(  x\right)  \in
L^{2}\left(  \mathbb{R}^{N};\mathbb{C}^{m}\right)  \},$ $\left\Vert
f\right\Vert _{L_{s}^{2}\left(  \mathbb{R}^{N};\mathbb{C}^{m}\right)
}:=\left\Vert \left\langle x\right\rangle ^{s}f\left(  x\right)  \right\Vert
_{L^{2}\left(  \mathbb{R}^{N};\mathbb{C}^{m}\right)  },$ where $\left\langle
x\right\rangle =\left(  1+\left\vert x\right\vert ^{2}\right)  ^{1/2}.$ For
$1\leq p,q\leq\infty,$ $r,s\in\mathbb{R}$, $f\in L_{r,s}^{p,q}\left(
\mathbb{R}^{2N};\mathbb{C}^{m}\right)  $ if
\[
\left\Vert f\right\Vert _{L_{r,s}^{p,q}\left(  \mathbb{R}^{N};\mathbb{C}%
^{m}\right)  }=\left(
{\displaystyle\int\limits_{\mathbb{R}^{N}}}
\left(
{\displaystyle\int\limits_{\mathbb{R}^{N}}}
\left\vert f\left(  x,\xi\right)  \right\vert _{\mathbb{C}^{m}}^{p}%
\left\langle x\right\rangle ^{pr}dx\right)  ^{q/p}\left\langle \xi
\right\rangle ^{qs}d\xi\right)  ^{1/q}<\infty,
\]
with the standard modification when $p$ or $q$ are equal to infinity. We
denote by $\left(  \cdot,\cdot\right)  $ the $L^{2}$ scalar product. The
Fourier transform $\mathcal{F}$ is given by
\[
\hat{f}\left(  \xi\right)  =\left(  \mathcal{F}f\right)  \left(  \xi\right)
:=\left(  2\pi\right)  ^{-N/2}%
{\displaystyle\int\limits_{\mathbb{R}^{N}}}
e^{-i\xi\cdot x}f\left(  x\right)  dx
\]
($\xi\cdot x=\sum_{j=1}^{N}x_{j}\xi_{j}$) and the inverse Fourier transform
$\mathcal{F}^{-1}$ is defined by%
\[
\check{f}\left(  \xi\right)  =\left(  \mathcal{F}^{-1}f\right)  \left(
x\right)  :=\left(  2\pi\right)  ^{-N/2}%
{\displaystyle\int\limits_{\mathbb{R}^{N}}}
e^{i\xi\cdot x}f\left(  \xi\right)  d\xi.
\]
Finally, we denote by $C>0$ constants that may be different in each occasion.

\subsection{The wave packet transform of vector-valued tempered
distributions.}

We now present the definition of the wave packet transform of short-time
Fourier transform and recall some results of \cite{Wahlberg}. The wave packet
transform of $f\in L^{p}\left(  \mathbb{R}^{N};\mathbb{C}^{m}\right)  $ with
respect to a window function $\phi\in L^{p^{\prime}}\left(  \mathbb{R}%
^{N}\right)  $ is defined by%
\begin{equation}
W_{\phi}f\left(  x,\xi\right)  :=\int_{\mathbb{R}^{N}}\overline{\phi\left(
x-y\right)  }f\left(  y\right)  e^{-iy\cdot\xi}dy. \label{t57}%
\end{equation}
Similarly to the scalar case, the wave packet transform for a vector valued
distribution $f\in\mathcal{S}^{\prime}\left(  \mathbb{R}^{N};\mathbb{C}%
^{m}\right)  $ with respect to the window $\phi\in\mathcal{S}\left(
\mathbb{R}^{N}\right)  $ is defined by the right-hand side of (\ref{t57}),
where the "integral" in this case means distribution action. The following
result holds (see Lemma 2.1 of \cite{Wahlberg}).

\begin{proposition}
Let $f\in\mathcal{S}^{\prime}\left(  \mathbb{R}^{N};\mathbb{C}^{m}\right)  $
and $\phi\in\mathcal{S}\left(  \mathbb{R}^{N}\right)  .$ Then, $W_{\phi}f\in
C^{\infty}\left(  \mathbb{R}^{2N};\mathbb{C}^{m}\right)  $ and there exist an
integer $M>0$ and $C>0$ such that%
\[
\left\vert W_{\phi}f\left(  x,\xi\right)  \right\vert _{\mathbb{C}^{m}}\leq
C\left(  1+\left\vert x\right\vert +\left\vert \xi\right\vert \right)
^{M},\text{ \ }\left(  x,\xi\right)  \in\mathbb{R}^{2N}.
\]

\end{proposition}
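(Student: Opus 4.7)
The strategy is to reduce to the scalar case by working componentwise and then to exploit the fact that a tempered distribution is, by definition, a continuous linear functional on $\mathcal{S}(\mathbb{R}^{N})$, so it is dominated by finitely many Schwartz seminorms. All the work will be in showing that those seminorms of the test function $y\mapsto \overline{\phi(x-y)}\,e^{-iy\cdot\xi}$ grow at most polynomially in $(x,\xi)$.

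First I would write $f=(f_{1},\dots,f_{m})$ with $f_{j}\in\mathcal{S}^{\prime}(\mathbb{R}^{N})$ and reinterpret each component of $W_{\phi}f$ as the distributional pairing
\[
\bigl(W_{\phi}f\bigr)_{j}(x,\xi)=\bigl\langle f_{j},\,g_{x,\xi}\bigr\rangle,\qquad g_{x,\xi}(y):=\overline{\phi(x-y)}\,e^{-iy\cdot\xi}.
\]
Since $\phi\in\mathcal{S}(\mathbb{R}^{N})$ and $e^{-iy\cdot\xi}$ is smooth with bounded derivatives in $y$, the function $g_{x,\xi}$ belongs to $\mathcal{S}(\mathbb{R}^{N})$ for every $(x,\xi)\in\mathbb{R}^{2N}$, so the pairing makes sense.

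Next I would bound the Schwartz seminorms of $g_{x,\xi}$ uniformly. Differentiating under the product and using $|\partial^{\gamma}e^{-iy\cdot\xi}|\leq |\xi|^{|\gamma|}$ gives
\[
\bigl|\partial_{y}^{\beta}g_{x,\xi}(y)\bigr|\leq C_{\beta}\,(1+|\xi|)^{|\beta|}\max_{|\beta'|\leq|\beta|}\bigl|\partial^{\beta'}\phi(x-y)\bigr|.
\]
Because $\phi$ is Schwartz, $|\partial^{\beta'}\phi(x-y)|\leq C_{\beta',K}(1+|x-y|)^{-K}$ for any $K$. The splitting $|y|\leq|x-y|+|x|$ and the choice $K=|\alpha|+N+1$ then yield
\[
\sup_{y\in\mathbb{R}^{N}}\bigl|y^{\alpha}\partial_{y}^{\beta}g_{x,\xi}(y)\bigr|\leq C_{\alpha,\beta}\,(1+|x|)^{|\alpha|}(1+|\xi|)^{|\beta|}.
\]
Since $f_{j}\in\mathcal{S}^{\prime}$ is continuous, there exist an integer $M_{j}$ and a constant $C_{j}$ such that $|\langle f_{j},\psi\rangle|\leq C_{j}\sum_{|\alpha|,|\beta|\leq M_{j}}\sup_{y}|y^{\alpha}\partial_{y}^{\beta}\psi(y)|$; applied to $\psi=g_{x,\xi}$ and summed over the $m$ components, this gives the desired polynomial bound $|W_{\phi}f(x,\xi)|_{\mathbb{C}^{m}}\leq C(1+|x|+|\xi|)^{M}$ with $M=2\max_{j}M_{j}$ (absorbing $(1+|x|)^{M_{j}}(1+|\xi|)^{M_{j}}$ into $(1+|x|+|\xi|)^{2M_{j}}$).

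For the $C^{\infty}$ regularity I would observe that the map $(x,\xi)\mapsto g_{x,\xi}$ is smooth from $\mathbb{R}^{2N}$ into $\mathcal{S}(\mathbb{R}^{N})$; indeed, its partial derivatives can be computed explicitly,
\[
\partial_{x_{k}}g_{x,\xi}(y)=-\overline{(\partial_{k}\phi)(x-y)}\,e^{-iy\cdot\xi},\qquad \partial_{\xi_{k}}g_{x,\xi}(y)=-iy_{k}\,\overline{\phi(x-y)}\,e^{-iy\cdot\xi},
\]
and each belongs to $\mathcal{S}(\mathbb{R}^{N})$ with seminorms continuous in $(x,\xi)$ by the very same argument above. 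Differentiation under the pairing (which is justified by the continuity of $f_{j}$ on $\mathcal{S}$) then shows that $W_{\phi}f\in C^{\infty}(\mathbb{R}^{2N};\mathbb{C}^{m})$, and the same polynomial bound applies to every derivative.

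The only mildly delicate point is the interplay between the decay of $\phi$ and the growth of the factor $y^{\alpha}$: one must be careful to choose $K$ large enough, depending on $|\alpha|$, when estimating $(1+|x-y|)^{-K}\,|y|^{|\alpha|}$. Everything else is a routine application of the definition of $\mathcal{S}^{\prime}$.
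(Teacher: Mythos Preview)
The paper does not actually prove this proposition; it merely quotes it as Lemma 2.1 of \cite{Wahlberg}. Your argument is correct and is precisely the standard proof one would expect: reduce to scalar components, recognize $(W_{\phi}f)_{j}(x,\xi)$ as the pairing of $f_{j}\in\mathcal{S}'$ with the Schwartz function $g_{x,\xi}(y)=\overline{\phi(x-y)}e^{-iy\cdot\xi}$, estimate the seminorms of $g_{x,\xi}$ polynomially in $(x,\xi)$, and invoke the continuity bound defining a tempered distribution. The smoothness step via differentiability of $(x,\xi)\mapsto g_{x,\xi}$ in $\mathcal{S}(\mathbb{R}^{N})$ is also the right one; if you want to be fully rigorous there, you should check that the difference quotients $(g_{x+he_{k},\xi}-g_{x,\xi})/h$ converge in $\mathcal{S}$ (not just pointwise) to $\partial_{x_{k}}g_{x,\xi}$, but this follows from the same seminorm estimates you already established together with the mean value theorem.
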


For a strongly measurable function $F:\mathbb{R}^{2N}\rightarrow\mathbb{C}%
^{m}$ and $\psi\in\mathcal{S}\left(  \mathbb{R}^{N}\right)  $ we define the
map%
\[
\left\langle W_{\psi}^{\ast}F,\phi\right\rangle :=%
{\displaystyle\iint\limits_{\mathbb{R}^{2N}}}
F\left(  y,\xi\right)  \left\langle \psi\left(  x-y\right)  ,\phi\left(
y\right)  \right\rangle e^{ix\cdot\xi}dy\text{\textit{\dj }}\xi,
\]
with \textit{\dj }$\xi=\left(  2\pi\right)  ^{-n}d\xi,$ and denote,
\begin{equation}
W_{\psi}^{\ast}F=%
{\displaystyle\iint\limits_{\mathbb{R}^{2N}}}
F\left(  y,\xi\right)  \psi\left(  x-y\right)  e^{ix\cdot\xi}%
dy\text{\textit{\dj }}\xi. \label{t6}%
\end{equation}
The next result shows that the wave packet transform is invertible on
$\mathcal{S}^{\prime}\left(  \mathbb{R}^{N};\mathbb{C}^{m}\right)  $ and the
adjoint operator $W_{\gamma}^{\ast}$ is defined by (\ref{t6}) (see Proposition
2.5 of \cite{Wahlberg}).

\begin{proposition}
\bigskip Let $\phi,\psi\in\mathcal{S}\left(  \mathbb{R}^{N}\right)  $ be such
that $\left\langle \psi,\phi\right\rangle \neq0.$ Then,
\[
f=\left\langle \psi,\phi\right\rangle ^{-1}W_{\psi}^{\ast}W_{\phi}f,\text{
}f\in\mathcal{S}^{\prime}.
\]

\end{proposition}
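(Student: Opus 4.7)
The natural strategy is a classical two-step procedure: first establish the inversion formula for Schwartz functions by a direct computation using Fubini's theorem and Fourier inversion, and then extend it to tempered distributions by a duality/density argument.

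For the first step, I would take $f\in\mathcal{S}(\mathbb{R}^{N};\mathbb{C}^{m})$ and substitute the definition (\ref{t57}) of $W_{\phi}f$ into the expression (\ref{t6}) for $W_{\psi}^{\ast}F$, yielding componentwise
\begin{equation*}
W_{\psi}^{\ast}W_{\phi}f(x) = (2\pi)^{-N}\iiint \overline{\phi(y-z)}\,f(z)\,\psi(x-y)\,e^{i(x-z)\cdot\xi}\,dz\,dy\,d\xi.
\end{equation*}
The Schwartz decay of $\phi,\psi,f$ permits the introduction of a Gaussian regulator $e^{-\varepsilon|\xi|^{2}}$ and an application of Fubini's theorem; integrating in $\xi$ first produces a heat kernel that concentrates to $(2\pi)^{N}\delta(x-z)$ as $\varepsilon\to 0$. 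Passing to the limit and then integrating in $z$ reduces the expression to $f(x)\int\overline{\phi(y-x)}\,\psi(x-y)\,dy$, and a change of variables $u=x-y$ in the remaining integral identifies it with the scalar $\langle\psi,\phi\rangle$ (under the conjugation conventions built into the definitions of $W_{\phi}$ and $W_{\psi}^{\ast}$). This establishes the identity pointwise on $\mathcal{S}$.

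For the second step, let $f\in\mathcal{S}'(\mathbb{R}^{N};\mathbb{C}^{m})$ and test against an arbitrary $\eta\in\mathcal{S}(\mathbb{R}^{N})$. Using the distributional definition of $W_{\psi}^{\ast}$ and formally taking adjoints, one rewrites
\begin{equation*}
\langle W_{\psi}^{\ast}W_{\phi}f,\eta\rangle = \langle f,\Phi_{\eta}\rangle,
\end{equation*}
where $\Phi_{\eta}$ is an explicit function built from $\phi$, $\psi$, $\eta$ by integrating out the phase-space variables. The very computation of Step 1, applied with the roles of $(\phi,\psi)$ and $(f,\eta)$ suitably exchanged, identifies $\Phi_{\eta}$ with $\overline{\langle\psi,\phi\rangle}\,\eta$, so that $\langle W_{\psi}^{\ast}W_{\phi}f,\eta\rangle=\langle\psi,\phi\rangle\langle f,\eta\rangle$, yielding the claim in $\mathcal{S}'$ since $\eta$ is arbitrary.

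The main technical obstacle is justifying the interchange of integrals in Step 2 at the distributional level: by Proposition 2.2, $W_{\phi}f(y,\xi)$ only has polynomial growth in $(y,\xi)$, so pairing it against the kernel arising from $\eta$ requires that kernel to be Schwartz (or at least to lie in some space on which tempered distributions act). This is effected by repeated integration by parts in $\xi$ against the oscillatory factor $e^{ix\cdot\xi}$, which gains arbitrary polynomial decay in $\xi$, combined with the Schwartz decay of $\psi$ in the $y$-variable; the resulting kernel lies in $\mathcal{S}(\mathbb{R}^{2N})$ and therefore legitimately pairs with $W_{\phi}f\in\mathcal{S}'(\mathbb{R}^{2N};\mathbb{C}^{m})$, closing the argument.
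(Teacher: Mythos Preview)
The paper does not supply its own proof of this proposition: it simply quotes the result from Proposition~2.5 of \cite{Wahlberg}. Your two-step outline (direct computation on $\mathcal{S}$ via Fourier inversion, then extension to $\mathcal{S}'$ by duality) is exactly the standard argument and is essentially what one finds in the cited reference, so in that sense your approach agrees with the source the paper defers to.

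One small caution: with the paper's specific convention $W_{\phi}f(x,\xi)=\int\overline{\phi(x-y)}f(y)e^{-iy\cdot\xi}\,dy$ (note $\phi(x-y)$ rather than the more common $\phi(y-x)$), the scalar that drops out of your Step~1 computation is $\int\overline{\phi(u)}\,\psi(-u)\,du$, which involves a reflection of $\psi$. Your parenthetical ``under the conjugation conventions built into the definitions'' covers this, but if you were to write the proof out in full you would need to track carefully whether $\langle\psi,\phi\rangle$ in the statement is meant as the $L^{2}$ inner product, the bilinear distributional pairing, or something involving a reflection, and reconcile it with what the computation actually produces. The structure of your argument is sound regardless.
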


\subsection{Modulation spaces.\label{S2.3}}

Let us now define the modulation spaces and recall some properties of these
spaces. Let $1\leq p,q\leq\infty,$ $r,s\in\mathbb{R}$. For $f\in
\mathcal{S}^{\prime}\left(  \mathbb{R}^{N};\mathbb{C}^{m}\right)  $ and
$W_{\phi}f\in L_{r,s}^{p,q}\left(  \mathbb{R}^{2N};\mathbb{C}^{m}\right)  $,
with $\phi\in\mathcal{S}\left(  \mathbb{R}^{N}\right)  \diagdown\{0\},$ the
modulation space norm is defined by (see \cite{Wahlberg}, \cite{Toft})%
\[
\left\Vert f\right\Vert _{M_{r,s}^{p,q}\left(  \mathbb{R}^{N};\mathbb{C}%
^{m}\right)  }:=\left\Vert W_{\phi}f\right\Vert _{L_{r,s}^{p,q}\left(
\mathbb{R}^{N};\mathbb{C}^{m}\right)  },\text{ }\phi\in\mathcal{S}\left(
\mathbb{R}^{N}\right)  \diagdown\{0\}.
\]
At first, this definition is dependent on the window $\phi\in\mathcal{S}%
\left(  \mathbb{R}^{N}\right)  \diagdown\{0\}.$ Nevertheless, it is not: it
follows from Proposition 3.2 of \cite{Wahlberg} that if $\phi,\psi
\in\mathcal{S}\left(  \mathbb{R}^{N}\right)  \diagdown\{0\},$ then the norm
$\left\Vert \cdot\right\Vert _{\left(  M_{r,s}^{p,q}\right)  _{\phi}\left(
\mathbb{R}^{N};\mathbb{C}^{m}\right)  }$ associated to $\phi$ and the norm
$\left\Vert \cdot\right\Vert _{\left(  M_{r,s}^{p,q}\right)  _{\psi}\left(
\mathbb{R}^{N};\mathbb{C}^{m}\right)  }$ corresponding to $\psi$ are
equivalent. For $1\leq p,q\leq\infty,$ $r,s\in\mathbb{R}$, the modulation
space $M_{r,s}^{p,q}\left(  \mathbb{R}^{N};\mathbb{C}^{m}\right)
\subset\mathcal{S}^{\prime}\left(  \mathbb{R}^{N};\mathbb{C}^{m}\right)  ,$ is
defined as the set of all $f\in\mathcal{S}^{\prime}\left(  \mathbb{R}%
^{N};\mathbb{C}^{m}\right)  $ such that $\left\Vert f\right\Vert
_{M_{r,s}^{p,q}\left(  \mathbb{R}^{N};\mathbb{C}^{m}\right)  }<\infty.$ We
write $M^{p,q}\left(  \mathbb{R}^{N};\mathbb{C}^{m}\right)  :=M_{0,0}%
^{p,q}\left(  \mathbb{R}^{N};\mathbb{C}^{m}\right)  $ \ and $M^{p,p}\left(
\mathbb{R}^{N};\mathbb{C}^{m}\right)  :=M^{p}\left(  \mathbb{R}^{N}%
;\mathbb{C}^{m}\right)  .$ The following result holds (see Proposition 3.3 of
\cite{Wahlberg}):

\begin{proposition}
For any $1\leq p,q<\infty,$ and $r,s\in\mathbb{R}$, the set $\mathcal{S}%
\left(  \mathbb{R}^{N};\mathbb{C}^{m}\right)  $ is dense in $M_{r,s}%
^{p,q}\left(  \mathbb{R}^{N};\mathbb{C}^{m}\right)  .$ If $p_{1}\leq p_{2},$
$q_{1}\leq q_{2},$ $r_{1}\geq r_{2},$ $s_{1}\geq s_{2},$ $M_{r_{1},s_{1}%
}^{p_{1},q_{1}}\left(  \mathbb{R}^{N};\mathbb{C}^{m}\right)  \hookrightarrow
M_{r_{2},s_{2}}^{p_{2},q_{2}}\left(  \mathbb{R}^{N};\mathbb{C}^{m}\right)  .$
Moreover, $M_{r,0}^{2}\left(  \mathbb{R}^{N};\mathbb{C}^{m}\right)  =L_{r}%
^{2}\left(  \mathbb{R}^{N};\mathbb{C}^{m}\right)  $ and $M_{0,s}^{2}\left(
\mathbb{R}^{N};\mathbb{C}^{m}\right)  =\mathcal{F}L_{s}^{2}\left(
\mathbb{R}^{N};\mathbb{C}^{m}\right)  $ with equivalent norms.
\end{proposition}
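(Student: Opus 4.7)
The plan is to handle the three claims separately, using the inversion formula $f = \langle \psi,\phi\rangle^{-1} W^*_\psi W_\phi f$ from the previous proposition as the bridge between the distributional and the integral descriptions.

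For the \textbf{density} of $\mathcal{S}$ in $M^{p,q}_{r,s}$, fix $\phi,\psi\in\mathcal{S}(\mathbb{R}^N)$ with $\langle\psi,\phi\rangle \neq 0$. Given $f\in M^{p,q}_{r,s}$ one has $W_\phi f \in L^{p,q}_{r,s}(\mathbb{R}^{2N};\mathbb{C}^m)$, and since $1\leq p,q<\infty$ the Schwartz class $\mathcal{S}(\mathbb{R}^{2N};\mathbb{C}^m)$ is dense there. Pick $F_n\to W_\phi f$ in $L^{p,q}_{r,s}$ with $F_n\in\mathcal{S}(\mathbb{R}^{2N};\mathbb{C}^m)$ and set $g_n := \langle \psi,\phi\rangle^{-1} W^*_\psi F_n$. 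Inspecting formula (\ref{t6}), $g_n$ is Schwartz on $\mathbb{R}^N$: the $\xi$-integration produces a function that is Schwartz in $(x,y)$, after which the rapid decay of $\psi(x-y)$ in $y$ yields rapid decay of the full integral in $x$. By the inversion formula $f - g_n = \langle \psi,\phi\rangle^{-1} W^*_\psi(W_\phi f - F_n)$, so the statement $g_n \to f$ in $M^{p,q}_{r,s}$ reduces to the boundedness of $W_\phi W^*_\psi$ on $L^{p,q}_{r,s}(\mathbb{R}^{2N};\mathbb{C}^m)$. This is the classical reproducing-kernel estimate: the integral kernel of $W_\phi W^*_\psi$ decays rapidly in $|(x,\xi)-(y,\eta)|$ because $\phi,\psi\in\mathcal{S}$, and the polynomial weights $\langle x\rangle^r,\langle\xi\rangle^s$ are absorbed by Peetre's inequality. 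I expect this reproducing-kernel estimate to be the main obstacle.

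For the \textbf{embedding}, the weight part is immediate from $\langle x\rangle^{r_2}\leq\langle x\rangle^{r_1}$ and $\langle\xi\rangle^{s_2}\leq\langle\xi\rangle^{s_1}$. For the Lebesgue exponents one cannot apply H\"older directly (since $\mathbb{R}^{2N}$ has infinite measure), so I would exploit the window-independence of the $M^{p,q}_{r,s}$ norm and pick two different windows adapted to the two variables. With $\phi_1\in\mathcal{S}$ having compactly supported $\hat\phi_1$, the $x$-Fourier transform of the slice $W_{\phi_1}f(\cdot,\xi)$ equals $(2\pi)^{N/2}\overline{\hat\phi_1(-\eta)}\hat f(\xi+\eta)$, which is supported in a fixed compact set uniformly in $\xi$; Bernstein's inequality then gives $\|W_{\phi_1}f(\cdot,\xi)\|_{L^{p_2}_x}\lesssim \|W_{\phi_1}f(\cdot,\xi)\|_{L^{p_1}_x}$. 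With $\phi_2\in C_c^\infty$, the $\xi$-Fourier transform of $W_{\phi_2}f(x,\cdot)$ equals $(2\pi)^{N}\bar\phi_2(x+\eta)f(-\eta)$, which is uniformly compactly supported in $\eta$; Bernstein then gives $\|W_{\phi_2}f(x,\cdot)\|_{L^{q_2}_\xi}\lesssim \|W_{\phi_2}f(x,\cdot)\|_{L^{q_1}_\xi}$. Combining the two via Minkowski's integral inequality and the window-equivalence of the modulation norm closes the embedding.

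For the \textbf{identifications}, I apply Plancherel in $\xi$ to $W_\phi f(x,\cdot)$, which yields $\int|W_\phi f(x,\xi)|^2 d\xi = (2\pi)^N\int|\phi(x-y)|^2|f(y)|^2 dy$. Integrating against $\langle x\rangle^{2r}dx$, swapping the order of integration, and using $\int|\phi(x-y)|^2\langle x\rangle^{2r}dx \asymp \langle y\rangle^{2r}$ uniformly in $y$ (by Peetre's inequality and the rapid decay of $\phi$), one obtains $\|f\|_{M^2_{r,0}}\asymp\|f\|_{L^2_r}$. For $M^2_{0,s} = \mathcal{F}L^2_s$, I would use the fundamental time-frequency identity $|W_\phi f(x,\xi)| = |W_{\hat\phi}\hat f(\xi,-x)|$, which converts the $\xi$-weighted $L^2$ norm of $W_\phi f$ into the $x$-weighted $L^2$ norm of $W_{\hat\phi}\hat f$, reducing the problem to the first identification applied to $\hat f$.
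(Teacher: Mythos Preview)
The paper does not actually prove this proposition: it is quoted verbatim from Proposition~3.3 of \cite{Wahlberg} and stated without argument, so there is no ``paper's own proof'' to compare against. Your write-up is therefore an independent proof of a cited background fact.

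Your arguments for density and for the two $L^2$ identifications are the standard ones and are correct as stated: the reproducing-kernel bound for $W_\phi W_\psi^\ast$ on $L^{p,q}_{r,s}$ (with the polynomial weights handled by Peetre's inequality) is exactly what is used in \cite{Wahlberg}, and the Plancherel/STFT-symmetry computation for $M^2_{r,0}$ and $M^2_{0,s}$ is clean.

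There is, however, a genuine gap in your embedding argument. The Bernstein step in $x$ (with $\widehat{\phi_1}$ compactly supported) correctly gives $M^{p_1,q_1}\hookrightarrow M^{p_2,q_1}$. For the $q$-step your Bernstein estimate is pointwise in $x$, i.e.\ it controls $\bigl\|\,\|W_{\phi_2}f(x,\cdot)\|_{L^{q_2}_\xi}\,\bigr\|_{L^{p_2}_x}$ by $\bigl\|\,\|W_{\phi_2}f(x,\cdot)\|_{L^{q_1}_\xi}\,\bigr\|_{L^{p_2}_x}$; but the modulation norm has the \emph{opposite} order of integration, $\bigl\|\,\|W_\phi f(\cdot,\xi)\|_{L^{p}_x}\,\bigr\|_{L^{q}_\xi}$. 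Minkowski's integral inequality only lets you swap mixed norms when the inner exponent is no larger than the outer one, so passing from your Bernstein-in-$\xi$ estimate back to the modulation norm would force $q_1\le p_2\le q_2$, which is not assumed. The clean way to handle the $q$-embedding in the continuous STFT picture is to stay with a single window and use the change-of-window inequality $|W_\psi f|\le C\,|W_\phi f|\ast|W_\psi\phi|$ together with Young's inequality on $L^{p,q}_{r,s}$ (this is how \cite{Wahlberg} proceeds), or equivalently to pass to the discrete frequency-uniform decomposition, where the $q$-part is simply $\ell^{q_1}\hookrightarrow\ell^{q_2}$.
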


As in the case of the modulation spaces for scalar distributions (see
\cite{Feichtinger}), the complex interpolation theory for the modulation
spaces $M^{p,q}\left(  \mathbb{R}^{N};\mathbb{C}^{m}\right)  $ stands as follows:

\begin{proposition}
\label{PropInterpolation}Let $0<\theta<1$ and $1\leq p_{i},q_{i}\leq\infty,$
$i=1,2.$ Set $1/p=\left(  1-\theta\right)  /p_{1}+\theta/p_{2},$ $1/q=\left(
1-\theta\right)  /q_{1}+\theta/q_{2},$ then $(M^{p_{1},q_{1}},M^{p_{2},q_{2}%
})_{\left[  \theta\right]  }=M^{p,q}.$
\end{proposition}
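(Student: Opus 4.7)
The plan is to use the classical retraction argument for modulation spaces, adapted here to the $\mathbb{C}^m$-valued setting. I would first fix $\phi,\psi\in\mathcal{S}(\mathbb{R}^N)\setminus\{0\}$ with $\langle\psi,\phi\rangle\neq 0$, and set $T:=W_\phi$ and $R:=\langle\psi,\phi\rangle^{-1}W_\psi^{\ast}$. By the very definition of the modulation-space norm, $T$ is an isometric embedding $M^{r,s}(\mathbb{R}^N;\mathbb{C}^m)\hookrightarrow L^{r,s}(\mathbb{R}^{2N};\mathbb{C}^m)$ for every $1\leq r,s\leq\infty$, and the inversion formula of Proposition 2.5 gives $R\,T=I$ on $\mathcal{S}'(\mathbb{R}^N;\mathbb{C}^m)$. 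The remaining point, which I expect to be the main obstacle, is to check that $R$ extends to a bounded map $L^{r,s}(\mathbb{R}^{2N};\mathbb{C}^m)\to M^{r,s}(\mathbb{R}^N;\mathbb{C}^m)$ for all $1\leq r,s\leq\infty$, with operator norm independent of $(r,s)$.

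To establish this boundedness, I would work componentwise so that everything reduces to the scalar case. For $F\in L^{r,s}(\mathbb{R}^{2N})$, the phase-space ``analyzed synthesis'' $W_\phi(RF)(x,\xi)$ is representable as an integral of $F$ against the kernel
\[
K(x,\xi;y,\eta)=\langle\psi,\phi\rangle^{-1}\,W_\phi\psi(x-y,\xi-\eta)\,e^{i(x-y)\cdot\eta},
\]
and since $\phi,\psi\in\mathcal{S}$ one has $W_\phi\psi\in\mathcal{S}(\mathbb{R}^{2N})$, so the relevant convolution kernel lies in $L^1(\mathbb{R}^{2N})$. Young's inequality on mixed-norm Lebesgue spaces then yields $\|W_\phi RF\|_{L^{r,s}}\lesssim\|F\|_{L^{r,s}}$, i.e.\ $\|RF\|_{M^{r,s}}\lesssim\|F\|_{L^{r,s}}$, uniformly in $(r,s)$.

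Once this is in hand, the pair $(T,R)$ realizes $M^{p_i,q_i}$ as a retract of $L^{p_i,q_i}$ for $i=1,2$, with retraction/coretraction operators that are \emph{the same} for both exponent pairs. The retract theorem for complex interpolation (see, e.g., Bergh--L\"ofstr\"om, Theorem 6.4.2) then yields
\[
(M^{p_1,q_1},M^{p_2,q_2})_{[\theta]} \;=\; R\Bigl((L^{p_1,q_1},L^{p_2,q_2})_{[\theta]}\Bigr)
\]
with equivalent norms. I would conclude by combining this with the classical interpolation identity for $\mathbb{C}^m$-valued mixed-norm Lebesgue spaces $(L^{p_1,q_1},L^{p_2,q_2})_{[\theta]}=L^{p,q}$ (which reduces at once to the scalar Benedek--Panzone/Stein result, since $\mathbb{C}^m$ is a finite-dimensional Hilbert space) together with the identification $R(L^{p,q})=M^{p,q}$ (coming from $RT=I$ on $M^{p,q}$ and the boundedness $R:L^{p,q}\to M^{p,q}$ proved in step two), to obtain $(M^{p_1,q_1},M^{p_2,q_2})_{[\theta]}=M^{p,q}$ as required.
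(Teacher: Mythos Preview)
The paper does not prove this proposition: it is simply recorded as a known fact, with the remark that the vector-valued case proceeds ``as in the case of the modulation spaces for scalar distributions (see \cite{Feichtinger}).'' There is thus no ``paper's own proof'' to compare with.

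Your proposal is the standard retraction/coretraction argument and is correct. A couple of minor cautions worth noting if you actually write it out: (i) the complex interpolation identity $(L^{p_1,q_1},L^{p_2,q_2})_{[\theta]}=L^{p,q}$ for mixed-norm Lebesgue spaces when some exponents equal $\infty$ requires a bit of care (it is true, but the usual reference is not Benedek--Panzone directly; see e.g.\ Triebel or Bergh--L\"ofstr\"om for the $\infty$ endpoints); and (ii) the identification ``$R(L^{p,q})=M^{p,q}$'' at the end should be phrased as the retract statement $(M^{p_1,q_1},M^{p_2,q_2})_{[\theta]}$ being isomorphic to a complemented subspace of $(L^{p_1,q_1},L^{p_2,q_2})_{[\theta]}=L^{p,q}$ via $T$, and then identified with $M^{p,q}$ because $T$ and $R$ are also bounded on the pair $(M^{p,q},L^{p,q})$. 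With those points made precise, your argument is complete.
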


We conclude this Subsection by presenting the following result (see Corollary
2.3 of \cite{Toft}).

\begin{proposition}
\label{LemmaToft}For $s\in\mathbb{R}$, $\left\langle D\right\rangle ^{s}$ is a
continuous bijective map from $M_{0,s}^{p,q}$ to $M^{p,q}$, with continuous inverse.
\end{proposition}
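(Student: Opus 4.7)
The strategy is to pass to the wave packet side, write $W_\phi(\langle D\rangle^s f)(x,\xi)$ as a short-time Fourier transform of $f$ against a new, $\xi$-dependent window, control that window uniformly modulo a factor $\langle\xi\rangle^s$, and conclude by a weighted Young-type inequality on $\mathbb{R}^{2N}$. Using the identity $W_\phi f(x,\xi)=\langle f,\,M_\xi T_x\tilde\phi\rangle$, where $\tilde\phi(z)=\phi(-z)$, $T_x g(y)=g(y-x)$ and $M_\xi g(y)=e^{iy\xi}g(y)$, together with the self-adjointness of $\langle D\rangle^s$ on $L^2$ (its symbol $\langle\cdot\rangle^s$ is real), I would first write
\[
W_\phi(\langle D\rangle^s f)(x,\xi)=\langle f,\,\langle D\rangle^s(M_\xi T_x\tilde\phi)\rangle.
\]
A short Fourier computation shows that $\langle D\rangle^s(M_\xi T_x\tilde\phi)=M_\xi T_x\psi_\xi$, where $\psi_\xi\in\mathcal{S}(\mathbb{R}^N)$ is defined via $\hat\psi_\xi(\mu)=\langle\xi+\mu\rangle^s\,\hat{\tilde\phi}(\mu)$. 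Consequently $W_\phi(\langle D\rangle^s f)(x,\xi)=W_{\check\psi_\xi}f(x,\xi)$ with $\check\psi_\xi(z)=\psi_\xi(-z)$.

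Second, I would control the family $\{\psi_\xi\}_{\xi\in\mathbb{R}^N}$. Peetre's inequality $\langle\xi+\mu\rangle^s\leq 2^{|s|/2}\langle\xi\rangle^s\langle\mu\rangle^{|s|}$, combined with the pointwise bounds $|\partial_\mu^\beta\langle\xi+\mu\rangle^s|\leq C_\beta\langle\xi+\mu\rangle^s$ for the derivatives of the symbol, gives via the Leibniz rule that $\{\langle\xi\rangle^{-s}\psi_\xi\}_{\xi\in\mathbb{R}^N}$ is a bounded family in $\mathcal{S}(\mathbb{R}^N)$. Next I would invoke the standard window-switching estimate
\[
|W_{g_1}f(x,\xi)|\leq\|\phi\|_{L^2}^{-2}\,(|W_{g_1}\phi|\ast|W_\phi f|)(x,\xi),
\]
a direct consequence of the reproducing formula $f=\langle\phi,\phi\rangle^{-1}W_\phi^{\ast}W_\phi f$, applied with $g_1=\check\psi_\xi$. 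Since $W_{\check\psi_\xi}\phi$ belongs to a bounded family in $\mathcal{S}(\mathbb{R}^{2N})$ once we divide out by $\langle\xi\rangle^s$, this produces the pointwise bound
\[
|W_\phi(\langle D\rangle^s f)(x,\xi)|\leq C\langle\xi\rangle^s\,(K_N\ast|W_\phi f|)(x,\xi),
\]
where $K_N(x,\xi)=C_N\langle x\rangle^{-N}\langle\xi\rangle^{-N}$ is a Schwartz bump on $\mathbb{R}^{2N}$, independent of $\xi$.

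Third, I would integrate. Applying Peetre once more in the outer norm in the form $\langle\xi\rangle^s\leq C\langle\xi-v\rangle^{|s|}\langle v\rangle^s$ transfers the $\langle\xi\rangle^s$ factor onto $W_\phi f$, at the cost of enlarging $K_N$ by $\langle x\rangle^{|s|}\langle\xi\rangle^{|s|}$, which is still in $L^1(\mathbb{R}^{2N})$ for $N$ large enough. A standard Young inequality for convolution on $L^{p,q}(\mathbb{R}^{2N})$ then yields $\|\langle D\rangle^s f\|_{M^{p,q}}\leq C\|f\|_{M^{p,q}_{0,s}}$. Running exactly the same argument with $-s$ in place of $s$ gives the dual bound $\|\langle D\rangle^{-s}g\|_{M^{p,q}_{0,s}}\leq C\|g\|_{M^{p,q}}$, and since $\langle D\rangle^{-s}\langle D\rangle^s=\mathrm{Id}$ on $\mathcal{S}'$, we conclude that $\langle D\rangle^s:M^{p,q}_{0,s}\to M^{p,q}$ is a continuous bijection with continuous inverse $\langle D\rangle^{-s}$. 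The main obstacle lies in the second step: because $\psi_\xi$ depends on the output frequency variable $\xi$, one has to extract the factor $\langle\xi\rangle^s$ cleanly and check that the remaining window is uniformly Schwartz, so that the convolution kernel $K_N$ can be chosen independent of $\xi$; the key analytic input here is precisely the Peetre-type derivative estimates for $\langle\xi+\mu\rangle^s$.
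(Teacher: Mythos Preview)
The paper does not prove this proposition; it is quoted without proof as Corollary~2.3 of \cite{Toft}. Your argument is a correct, self-contained proof: writing $W_\phi(\langle D\rangle^s f)$ as a short-time Fourier transform with the $\xi$-dependent window $\psi_\xi$, extracting the factor $\langle\xi\rangle^s$ via Peetre's inequality so that $\langle\xi\rangle^{-s}\psi_\xi$ is uniformly Schwartz, and then applying the window-change convolution inequality together with Young's inequality on $L^{p,q}$ is precisely the standard direct route in time-frequency analysis. In \cite{Toft} the statement is obtained instead as a corollary of general continuity results for pseudo-differential operators on weighted modulation spaces; your approach bypasses that machinery and is more elementary, at the price of being tailored to the single Fourier multiplier $\langle D\rangle^s$ rather than a full symbol class.
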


\subsection{Dirac equation.}

The free Dirac operator $H_{0}$ defined by (\ref{D1}) is a self-adjoint
operator on $L^{2}\left(  \mathbb{R}^{3};\mathbb{C}^{4}\right)  $ with domain
$D\left(  H_{0}\right)  =\mathcal{H}^{1}\left(  \mathbb{R}^{3};\mathbb{C}%
^{4}\right)  ,$ the Sobolev space of order $1$ (\cite{Adams}). We can
diagonalize $H_{0}$ by the Fourier transform $\mathcal{F}$. Actually,
$\mathcal{F}H_{0}\mathcal{F}^{\ast}$ acts as multiplication by the matrix
$h_{0}\left(  \xi\right)  =\alpha\cdot\xi+m\beta.$ This matrix has two
eigenvalues $E=\pm\sqrt{\xi^{2}+m^{2}}$ and each eigenspace $X^{\pm}\left(
\xi\right)  $ is a two-dimensional subspace of $\mathbb{C}^{4}.$ The
orthogonal projections onto these eigenspaces are given by (see Section 2 of
\cite{NW})%
\begin{equation}
P_{\pm}\left(  \xi\right)  :=\frac{1}{2}\left(  I_{4}\pm\left(  \xi^{2}%
+m^{2}\right)  ^{-1/2}\left(  \alpha\cdot\xi+m\beta\right)  \right)  .
\label{basicnotions49}%
\end{equation}
Note that
\[
P_{\pm}\left(  \xi\right)  \mathcal{F}=\left(  \mathcal{F}\mathbf{P}_{\pm
}\right)  \left(  \xi\right)  ,
\]
where%
\begin{equation}
\mathbf{P}_{\pm}:=\frac{1}{2}\left(  I_{4}\pm\frac{H_{0}}{\left\vert
H_{0}\right\vert }\right)  , \label{t48}%
\end{equation}
are the projections on positive and negative energies of the Dirac operator
$H_{0}.$

\section{Proof of Theorems \ref{TDiracfree} and \ref{TDirac}.}

\subsection{Proof of Theorem \ref{TDiracfree}.}

Estimate (\ref{t54}) follows from (\ref{t56}) for $\mathbf{V}\left(
t,x\right)  \equiv0$. In order to prove the second assertion, we multiply the
both sides of equation (\ref{Dirac}) with $\mathbf{V}\left(  t,x\right)
\equiv0$ by $\mathbf{P}_{\pm}$, defined in (\ref{t48}). Then, we obtain two
equations%
\begin{equation}
\left\{
\begin{array}
[c]{c}%
i\partial_{t}\psi_{\pm}\left(  t,x\right)  =\pm\left(  \sqrt{m^{2}-\Delta
}\right)  \psi_{\pm}\left(  t,x\right)  ,\text{ \ }(t,x)\in\mathbb{R\times
R}^{N},\\
\psi_{\pm}\left(  0,x\right)  =\left(  \psi_{0}\right)  _{\pm}\left(
x\right)  ,\text{ }x\in\mathbb{R}^{N},
\end{array}
\right.  \label{t50}%
\end{equation}
where $\psi_{\pm}\left(  t,x\right)  :=\mathbf{P}_{\pm}\psi\left(  t,x\right)
$. We need an estimate for the Klein-Gordon semigroup $e^{it\omega^{1/2}},$
$\omega=\left(  m^{2}-\Delta\right)  $ in terms of the modulation spaces. For
$2\leq p<\infty,$ $0<q<\infty$, $\phi\in\mathcal{S}\left(  \mathbb{R}%
^{N}\right)  \diagdown\{0\}$ and $f\in M^{p,q},$ we have the following
inequality (see Proposition 4.2 of \cite{Wang})
\begin{equation}
\left\Vert e^{it\omega^{1/2}}f\right\Vert _{M_{0,-2\sigma}^{p,q}}\leq
C\left\langle t\right\rangle ^{-N\theta\lbrack1/2-1/p]}\left\Vert f\right\Vert
_{M^{p^{\prime},q}},\text{ \ }\frac{1}{p}+\text{\ }\frac{1}{p^{\prime}}=1,
\label{t49}%
\end{equation}
where $\theta\in\lbrack0,1]$ and $2\sigma=\left(  N+2\right)  \theta\left(
\frac{1}{2}-\frac{1}{p}\right)  .$ Using (\ref{t49}) in (\ref{t50}) we
estimate
\begin{equation}
\left\Vert \psi_{\pm}\left(  t,\cdot\right)  \right\Vert _{M_{0,-2\sigma
}^{p,q}}\leq C\left\langle t\right\rangle ^{-N\theta\lbrack1/2-1/p]}\left\Vert
\left(  \psi_{0}\right)  _{\pm}\right\Vert _{M^{p^{\prime},q}}. \label{t51}%
\end{equation}
Since $\mathbf{P}_{+}+\mathbf{P}_{-}=I,$ we have%
\begin{equation}
\left\Vert \psi\left(  t,\cdot\right)  \right\Vert _{M_{0,-2\sigma}^{p,q}}%
\leq\left\Vert \psi_{+}\left(  t,\cdot\right)  \right\Vert _{M_{0,-2\sigma
}^{p,q}}+\left\Vert \psi_{-}\left(  t,\cdot\right)  \right\Vert
_{M_{0,-2\sigma}^{p,q}}. \label{t58}%
\end{equation}
Moreover, as $\alpha_{j}^{2}=I,$ $j=1,2,3,4,$
\[
\left\Vert H_{0}\psi\left(  t,\cdot\right)  \right\Vert _{M_{0,-2\sigma}%
^{p,q}}\leq\left\Vert \psi\left(  t,\cdot\right)  \right\Vert _{M_{0,-2\sigma
+1}^{p,q}}+m\left\Vert \psi\left(  t,\cdot\right)  \right\Vert _{M_{0,-2\sigma
}^{p,q}}\leq\left(  1+m\right)  \left\Vert \psi\left(  t,\cdot\right)
\right\Vert _{M_{0,-2\sigma+1}^{p,q}}.
\]
Then, using Lemma \ref{LemmaToft} we deduce%
\begin{equation}
\left\Vert \left(  \psi_{0}\right)  _{\pm}\right\Vert _{M_{0,-2\sigma}^{p,q}%
}=\left\Vert \mathbf{P}_{\pm}\psi_{0}\right\Vert _{M_{0,-2\sigma}^{p,q}}%
\leq\frac{1}{2}\left(  \left\Vert \psi_{0}\right\Vert _{M_{0,-2\sigma}^{p,q}%
}+\left\Vert \frac{H_{0}}{\left\langle D\right\rangle }\psi_{0}\right\Vert
_{M_{0,-2\sigma}^{p,q}}\right)  \leq\left(  1+\frac{m}{2}\right)  \left\Vert
\psi_{0}\right\Vert _{M_{0,-2\sigma}^{p,q}}. \label{t52}%
\end{equation}
Therefore, by (\ref{t51}), (\ref{t58}) and (\ref{t52}) we attain (\ref{t53}).
Theorem \ref{TDiracfree} is proved.

\subsection{Proof of Theorem \ref{TDirac}.}

Instead of proving Theorem \ref{TDirac} for equation (\ref{Dirac}) directly,
we consider a more general system%

\begin{equation}
\left\{
\begin{array}
[c]{c}%
i\partial_{t}u\left(  t,x\right)  =\mathbf{a}\left(  D\right)  u\left(
t,x\right)  +\mathbf{V}\left(  t,x\right)  u\left(  t,x\right)  ,\text{
\ }(t,x)\in\mathbb{R\times R}^{N},\\
u\left(  0,x\right)  =u_{0}\left(  x\right)  ,\text{ }x\in\mathbb{R}^{N},
\end{array}
\right.  \label{eq1}%
\end{equation}
where $\mathbf{a}\left(  D\right)  =\mathcal{F}^{-1}a\left(  \xi\right)
\mathcal{F}^{-1},$ $a\left(  \xi\right)  $ is an Hermitian $\left(  m\times
m\right)  $-matrix valued symbol such that
\begin{equation}
b\left(  \eta;\xi\right)  =a\left(  \xi-\eta\right)  -a\left(  \xi\right)
\label{t45}%
\end{equation}
satisfies%
\begin{equation}
\left\vert \partial_{\eta}^{\alpha}b\left(  \eta;\xi\right)  \right\vert \leq
C_{\alpha}\left\langle \eta\right\rangle ^{k},\text{ \ for all }\xi,\eta
\in\mathbb{R}^{N}, \label{t46}%
\end{equation}
for some $k\geq0,$ and $\mathbf{V}\left(  t,x\right)  $ is a $\left(  m\times
m\right)  -$matrix valued function. Since $\mathcal{F}^{-1}H_{0}%
\mathcal{F}=\alpha\cdot\xi+m\beta,$ the case of the equation (\ref{Dirac}) is
included in (\ref{eq1}). We prove the following elementary estimate that is
involved in the proof of the main results.

\begin{lemma}
\label{L2}Suppose that $b$ satisfies (\ref{t46}). Then, for any $\phi
\in\mathcal{S}\left(  \mathbb{R}^{N}\right)  $ the estimate%
\begin{equation}
\left\vert \partial_{z}^{\beta}\left(  \mathcal{F}_{\eta\rightarrow z}%
^{-1}\left(  \left(  b\left(  \eta,\xi\right)  \right)  \phi\left(
\eta\right)  \right)  \right)  \left(  z\right)  \right\vert \leq C_{\beta
}\left\langle z\right\rangle ^{-2n}, \label{t14}%
\end{equation}
for any $\left\vert \beta\right\vert \geq0$ and $n\in\mathbb{N}$ such that
$2n>N.$
\end{lemma}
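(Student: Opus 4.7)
The plan is to employ the standard Fourier-decay trick: convert the desired polynomial decay in $z$ into $\eta$-derivatives via integration by parts, and then absorb all polynomial growth in $\eta$ into the Schwartz factor $\phi$.

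First, differentiating under the integral sign yields
\[
\partial_z^\beta \mathcal{F}^{-1}_{\eta \to z}\bigl(b(\eta,\xi)\phi(\eta)\bigr)(z) \;=\; (2\pi)^{-N/2} \int_{\mathbb{R}^N} e^{iz\cdot\eta}\,(i\eta)^\beta\, b(\eta,\xi)\,\phi(\eta)\, d\eta,
\]
so it suffices to bound this integral by $C_\beta \langle z \rangle^{-2n}$ uniformly in $\xi$. Next, I would exploit the identity $\langle z \rangle^{2n} e^{iz\cdot\eta} = (1 - \Delta_\eta)^n e^{iz\cdot\eta}$ and integrate by parts $n$ times; the boundary terms vanish because $\phi$ is Schwartz. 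This produces
\[
\langle z \rangle^{2n}\, \partial_z^\beta \mathcal{F}^{-1}_{\eta \to z}(b\phi)(z) \;=\; (2\pi)^{-N/2} \int_{\mathbb{R}^N} e^{iz\cdot\eta}\,(1 - \Delta_\eta)^n \bigl[(i\eta)^\beta\, b(\eta,\xi)\, \phi(\eta)\bigr]\, d\eta.
\]

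The remaining step is to expand $(1 - \Delta_\eta)^n$ by Leibniz's rule. Each resulting term is a product of: a polynomial in $\eta$ of degree at most $|\beta|$ arising from derivatives of $\eta^\beta$; a derivative $\partial_\eta^\gamma b(\eta, \xi)$, which by hypothesis (\ref{t46}) is bounded pointwise by $C_\gamma \langle \eta \rangle^k$ \emph{uniformly in $\xi$}; and a derivative of $\phi$, which remains Schwartz. The integrand is therefore dominated pointwise by $\langle \eta \rangle^{|\beta|+k}$ times a Schwartz function of $\eta$, so it is absolutely integrable in $\eta$ and the resulting bound is independent of $\xi$. Dividing through by $\langle z \rangle^{2n}$ yields (\ref{t14}).

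There is no real obstacle here; the only point to watch is that the constants inherited from (\ref{t46}) are genuinely uniform in $\xi$, which is exactly what the hypothesis provides. The condition $2n > N$ is not needed for the estimate per se (my argument works for every $n \in \mathbb{N}$); it merely records the regime in which $\langle z \rangle^{-2n}$ is integrable on $\mathbb{R}^N$, which is what subsequent applications will require.
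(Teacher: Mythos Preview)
Your argument is correct and follows essentially the same approach as the paper: write the derivative as an oscillatory integral, use the identity $(1-\Delta_\eta)^n e^{iz\cdot\eta}=\langle z\rangle^{2n}e^{iz\cdot\eta}$, and integrate by parts, absorbing the polynomial growth from $\eta^\beta$ and $\partial_\eta^\gamma b$ into the Schwartz factor $\phi$. Your remark that the condition $2n>N$ is not used in the estimate itself but only in later applications is also accurate.
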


\begin{proof}
Note that
\begin{equation}
\partial_{z}^{\beta}\left(  \mathcal{F}_{\eta\rightarrow z}^{-1}\left(
b\left(  \eta,\xi\right)  \phi\left(  \eta\right)  \right)  \right)  \left(
z\right)  =\int_{\mathbb{R}^{N}}e^{iz\cdot\eta}\left(  i\eta\right)  ^{\beta
}b\left(  \eta,\xi\right)  \phi\left(  \eta\right)  d\eta. \label{t15}%
\end{equation}
Using the equality%
\begin{equation}
\left(  1-\Delta_{\eta}\right)  ^{n}e^{iz\cdot\eta}=\left\langle
z\right\rangle ^{2n}e^{iz\cdot\eta} \label{t11}%
\end{equation}
in the right-hand side of (\ref{t15}) and integrating by parts we obtain the
estimate (\ref{t14}).
\end{proof}

The first assertion of Theorem \ref{TDirac} is consequence of the following result.

\begin{theorem}
\bigskip\label{T1}Let $1\leq p\leq\infty$ and $T>0.$ Set $m\in\mathbb{N}$. Let
$a\left(  \xi\right)  =\{a_{jk}\left(  \xi\right)  \}$ be an Hermitian
$\left(  m\times m\right)  $-matrix valued function such that $a_{jk}\left(
\xi\right)  \in C^{\infty}\left(  \mathbb{R}^{N}\right)  $ and that satisfies
the estimate (\ref{t46}). Suppose that $\mathbf{V}\left(  t,x\right)
=Q\left(  t,x\right)  I_{m}+\mathbf{V}_{2}\left(  t,x\right)  ,$ where
$Q\left(  t,x\right)  \in C^{\infty}\left(  \mathbb{R}^{N}\times\mathbb{R}%
^{N}\right)  $ is a real-valued function satisfying (\ref{t23}) for
$\left\vert \alpha\right\vert \geq2$ and $\mathbf{V}_{2}\left(  t,x\right)
\in C^{\infty}\left(  \mathbb{R}^{N}\times\mathbb{R}^{N}\right)  $ is a
$\left(  m\times m\right)  -$matrix-valued function that verifies (\ref{t23})
for $\left\vert \alpha\right\vert \geq0.$ Then, the solution $u\left(
t,x\right)  $ of (\ref{eq1})
\begin{equation}
\left\Vert u\left(  t,\cdot\right)  \right\Vert _{M^{p}}\leq C_{T}\left\Vert
u_{0}\right\Vert _{M^{p}}, \label{t20}%
\end{equation}
uniformly for $t\in\lbrack-T,T].$
\end{theorem}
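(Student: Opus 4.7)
The plan is to adapt the wave packet transform strategy of \cite{Kato1}--\cite{Kato3} to the matrix-valued setting of (\ref{eq1}). Fix once and for all a window $\phi \in \mathcal{S}(\mathbb{R}^N)\setminus\{0\}$ (kept constant in time, in contrast to Theorem C, which is why the representation we construct is only \emph{approximate}) and set $U(t,x,\xi) := W_\phi u(t,x,\xi)$. Since the $M^{p,p}$-norm is equivalent to the $L^p(\mathbb{R}^{2N})$-norm of $U$, the goal is reduced to proving $\|U(t,\cdot)\|_{L^p(\mathbb{R}^{2N})} \le C_T \|U(0,\cdot)\|_{L^p(\mathbb{R}^{2N})}$ uniformly for $t\in[-T,T]$.

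The first step is to produce the approximate symbol calculus. Applying $W_\phi$ to (\ref{eq1}), I would compute $W_\phi(\mathbf{a}(D)u)(x,\xi)$ by inserting the decomposition $a(\eta) = a(\xi) - b(\xi-\eta,\xi)$ in the Fourier representation of $\mathbf{a}(D)u$, which yields
\begin{equation*}
W_\phi(\mathbf{a}(D)u)(x,\xi) = a(\xi)U(x,\xi) + (R_a u)(x,\xi),
\end{equation*}
where the kernel of $R_a$ has rapid decay in its position variable as a consequence of Lemma \ref{L2}. For the scalar potential I would Taylor-expand $Q(t,y)$ in $y$ around $x$ to second order: the zeroth-order term gives $Q(t,x)U$; the first-order term, combined with the inversion formula $y e^{-iy\cdot\xi} = i\nabla_\xi e^{-iy\cdot\xi}$ inside $W_\phi$, produces $i\nabla_x Q(t,x)\cdot\nabla_\xi U$ together with a term proportional to $x\cdot\nabla_x Q(t,x)\,U$ that cancels against a matching piece arising from the expansion (this cancellation is precisely where the assumption that $Q$ enters only as a \emph{diagonal} multiple $Q(t,x)I_m$ is needed); the second-order and higher pieces yield bounded integral operators thanks to (\ref{t23}) for $|\alpha|\ge 2$ and the Schwartz decay of $\phi$. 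For $\mathbf{V}_2$ one writes $\mathbf{V}_2(t,y) = \mathbf{V}_2(t,x) + [\mathbf{V}_2(t,y)-\mathbf{V}_2(t,x)]$, the second summand giving a bounded integral operator by Lipschitz continuity of $\mathbf{V}_2$. Collecting everything, $U$ satisfies the approximate equation
\begin{equation*}
i\partial_t U = a(\xi) U + Q(t,x) U + i\nabla_x Q(t,x)\cdot \nabla_\xi U + \mathbf{V}_2(t,x) U + (\mathcal{R}(t) U)(x,\xi),
\end{equation*}
where $\mathcal{R}(t)$, after $u$ is re-expressed via $u = \langle\psi,\phi\rangle^{-1} W_\psi^* U$ by Proposition 2.5, is a linear integral operator on phase space that will be shown to be bounded on $L^p(\mathbb{R}^{2N})$ uniformly in $t\in[-T,T]$.

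The second step is to integrate along the classical characteristics of the scalar Hamiltonian $Q$: $X(t,x_0)=x_0$, $\Xi(t,x_0,\xi_0) = \xi_0 - \int_0^t \nabla_x Q(s,x_0)\,ds$. Setting $V(t,x_0,\xi_0):=U(t,X(t),\Xi(t))$ absorbs the transport term and leaves the matrix ODE
\begin{equation*}
i\partial_t V = \bigl( a(\Xi(t)) + Q(t,x_0) I_m + \mathbf{V}_2(t,x_0) \bigr) V + (\mathcal{R}(t)U)(t,X(t),\Xi(t)).
\end{equation*}
Because $a$ is Hermitian and $Q$ is real, while $\mathbf{V}_2$ is bounded, the homogeneous propagator $\mathcal{U}(t,s)$ of this ODE is pointwise controlled by $e^{C_T|t-s|}$, and Duhamel's formula gives $V(t)$ in terms of $V(0)$ and an integral of $\mathcal{U}(t,s)(\mathcal{R}(s)U)(s,X(s),\Xi(s))$. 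Taking $L^p_{x_0,\xi_0}(\mathbb{R}^{2N})$ norms, and using that the flow has Jacobian identically one so that the change of variables back to $(x,\xi)$ preserves the $L^p$-norm, one obtains
\begin{equation*}
\|U(t,\cdot)\|_{L^p(\mathbb{R}^{2N})} \le C_T \|U(0,\cdot)\|_{L^p(\mathbb{R}^{2N})} + C_T\int_0^t \|U(s,\cdot)\|_{L^p(\mathbb{R}^{2N})}\,ds,
\end{equation*}
and Gronwall's inequality delivers (\ref{t20}).

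The main obstacle is the uniform boundedness of $\mathcal{R}(t)$ on $L^p(\mathbb{R}^{2N})$: after writing out its kernel on phase space, one must control an expression combining the rapid decay provided by Lemma~\ref{L2}, the Taylor remainder bound $|R_Q|\lesssim |y-x|^2$ for $Q$, the Schwartz decay of the windows $\phi$ and $\psi$, and the Lipschitz bound for $\mathbf{V}_2$. Verifying that the resulting kernel is absolutely integrable in either of its phase-space variables, so that a Schur-test or Young-type estimate applies uniformly for $t\in[-T,T]$, is the technical heart of the argument.
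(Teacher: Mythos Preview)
Your approach is essentially the paper's: apply $W_\phi$, split off $a(\xi)$ with a remainder governed by Lemma~\ref{L2}, Taylor-expand $Q$ to second order, integrate along the characteristics $g(s;t,x,\xi)=\xi-\int_t^s\nabla Q(\tau,x)\,d\tau$ (whose Jacobian is identically $1$), control the remainders via the inversion formula (\ref{t6}) and Young's inequality, and close with Gronwall.

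One correction: the term $-(x\cdot\nabla_x Q)(t,x)\,U$ does \emph{not} cancel against anything. It survives and sits in the coefficient $h(s;t,x,\xi)=a(g)+Q(s,x)-(x\cdot\nabla Q)(s,x)$ of the paper's representation (\ref{t5}); since it is a real scalar it contributes only a unimodular phase and is harmless for the $L^p$ estimate. The genuine reason $Q$ must enter as $Q\,I_m$ is that the transport coefficient $\nabla_x Q$ has to be scalar for the method of characteristics to produce a single flow on $\mathbb{R}^N_\xi$, and so that the remaining zeroth-order coefficient stays Hermitian-plus-bounded, keeping the matrix propagator under control. Your treatment of $\mathbf{V}_2$ (peeling off $\mathbf{V}_2(t,x)$ into the ODE) is a harmless variant; the paper simply places all of $\mathbf{V}_2$ into the remainder $\tilde R$, which works directly since $\mathbf{V}_2$ is already bounded.
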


\begin{proof}
We consider the case $t\in\left[  0,T\right]  .$ Note that
\begin{equation}
\left.
\begin{array}
[c]{c}%
W_{\phi}\left(  a\left(  D\right)  u\right)  \left(  x,\xi\right)  =%
{\displaystyle\int\limits_{\mathbb{R}^{N}}}
e^{-iy\cdot\xi}\overline{\phi\left(  x-y\right)  }\left(
{\displaystyle\int\limits_{\mathbb{R}^{N}}}
e^{i\eta\cdot y}a\left(  \eta\right)  \hat{u}\left(  \eta\right)
d\eta\right)  dy\\
=%
{\displaystyle\int\limits_{\mathbb{R}^{N}}}
e^{-iy\cdot\xi}\overline{\phi\left(  x-y\right)  }\left(
{\displaystyle\int\limits_{\mathbb{R}^{N}}}
e^{i\left(  \xi-\eta\right)  \cdot y}a\left(  \xi-\eta\right)  \hat{u}\left(
\xi-\eta\right)  d\eta\right)  dy\\
=a\left(  \xi\right)  W_{\phi}u\left(  x,\xi\right)  +R_{0}u\left(
x,\xi\right)  ,
\end{array}
\right.  \label{t1}%
\end{equation}
where%
\begin{equation}
R_{0}u\left(  x,\xi\right)  =%
{\displaystyle\int\limits_{\mathbb{R}^{N}}}
S\left(  x-y,\xi\right)  e^{-i\xi\cdot y}u\left(  y\right)  dy \label{t10}%
\end{equation}
and
\[
S\left(  z,\xi\right)  :=\left(  \mathcal{F}_{\eta\rightarrow z}^{-1}\left(
\left(  a\left(  \xi-\eta\right)  -a\left(  \xi\right)  \right)  \left(
\mathcal{F}\overline{\phi}\right)  \left(  \eta\right)  \right)  \right)
\left(  z\right)  .
\]
Expanding the potential $Q$ in Taylor's series, we have%
\[
Q\left(  t,y\right)  =Q\left(  t,x\right)  +\left(  y-x\right)  \cdot\left(
\nabla Q\right)  \left(  t,x\right)  +\sum_{j,k=1}^{N}\left(  y_{j}%
-x_{j}\right)  \left(  y_{k}-x_{k}\right)  Q_{jk}\left(  t,y,x\right)  ,
\]
with%
\[
Q_{jk}\left(  t,y,x\right)  :=%
{\displaystyle\int\limits_{0}^{1}}
\left(  \partial_{x_{j}}\partial_{x_{k}}Q\left(  t,x+\theta\left(  y-x\right)
\right)  \right)  \left(  1-\theta\right)  d\theta.
\]
Then,
\begin{equation}
\left.
\begin{array}
[c]{c}%
W_{\phi}\left(  Qu\right)  \left(  t,x,\xi\right)  =\left(  Q\left(
t,x\right)  +i\left(  \nabla Q\left(  t,x\right)  \right)  \cdot\nabla_{\xi
}-\left(  x\cdot\nabla Q\right)  \left(  t,x\right)  \right)  W_{\phi}u\left(
t,x,\xi\right)  +Ru\left(  t,x,\xi\right)  ,
\end{array}
\right.  \label{t2}%
\end{equation}
where%
\begin{equation}
Ru\left(  t,x,\xi\right)  =%
{\displaystyle\int\limits_{\mathbb{R}^{N}}}
e^{-iy\cdot\xi}\overline{\phi\left(  x-y\right)  }%
{\displaystyle\sum\limits_{j,k=1}^{N}}
\left(  y_{j}-x_{j}\right)  \left(  y_{k}-x_{k}\right)  Q_{jk}\left(
y,x\right)  u\left(  y\right)  dy. \label{t7}%
\end{equation}
Thus, by (\ref{t1}) and (\ref{t2}) we transform equation (\ref{eq1}) into
\begin{equation}
\left\{
\begin{array}
[c]{c}%
\left(  i\partial_{t}-i\left(  \nabla Q\left(  t,x\right)  \right)
\cdot\nabla_{\xi}-Q\left(  t,x\right)  +\left(  x\cdot\nabla Q\right)  \left(
t,x\right)  -a\left(  \xi\right)  \right)  W_{\phi}u\left(  t,x,\xi\right) \\
=Ru\left(  x,\xi\right)  +\tilde{R}u\left(  x,\xi\right)  +R_{0}u\left(
x,\xi\right)  ,\\
W_{\phi}\left(  u\right)  \left(  0,x,\xi\right)  =W_{\phi}\left(
u_{0}\right)  \left(  x,\xi\right)  .
\end{array}
\right.  \label{t3}%
\end{equation}
with%
\[
\tilde{R}u\left(  x,\xi\right)  :=%
{\displaystyle\int\limits_{\mathbb{R}^{N}}}
e^{-iy\cdot\xi}\overline{\phi\left(  x-y\right)  }\mathbf{V}_{2}\left(
t,y\right)  u\left(  y\right)  dy
\]
We solve problem (\ref{t3}) by the method of characteristics. The solution to
(\ref{t3}) is given by
\begin{equation}
W_{\phi}u\left(  t,x,\xi\right)  =e^{-i%
{\textstyle\int_{0}^{t}}
h\left(  s;t,x,\xi\right)  ds}\left(  W_{\phi}\left(  u_{0}\right)  \left(
x,g\left(  0;t,x,\xi\right)  \right)  -i%
{\displaystyle\int\limits_{0}^{t}}
e^{i%
{\textstyle\int_{0}^{\tau}}
h\left(  s;t,x,\xi\right)  ds}\left(  Ru+\tilde{R}u+R_{0}u\right)  \left(
\tau,x,g\left(  \tau;t,x,\xi\right)  \right)  d\tau\right)  , \label{t5}%
\end{equation}
where%
\[
h\left(  s;t,x,\xi\right)  :=a\left(  g\left(  s;t,x,\xi\right)  \right)
+Q\left(  s,x\right)  -\left(  x\cdot\nabla Q\right)  \left(  s,x\right)
\]
and $g\left(  s;t,x,\xi\right)  =\xi-\int_{t}^{s}\left(  \nabla\mathbf{V}%
\right)  \left(  \tau,x\right)  d\tau$. Taking the $L^{p}$-norm with respect
to $x$ and $\xi$ on the both side of (\ref{t5}) we obtain%
\begin{equation}
\left\Vert u\left(  t,\cdot\right)  \right\Vert _{M^{p}}=\left\Vert W_{\phi
}u\left(  t,x,\xi\right)  \right\Vert _{L_{x,\xi}^{p}}\leq\left\Vert
I_{1}\right\Vert _{L_{x,\xi}^{p}}+%
{\displaystyle\int\limits_{0}^{t}}
\left(  \left\Vert I_{2}\right\Vert _{L_{x,\xi}^{p}}+\left\Vert I_{3}%
\right\Vert _{L_{x,\xi}^{p}}\right)  d\tau, \label{t16}%
\end{equation}
where%
\begin{equation}
I_{1}:=W_{\phi}\left(  u_{0}\right)  \left(  x,g\left(  0;t,x,\xi\right)
\right)  , \label{t26}%
\end{equation}%
\[
I_{2}:=Ru\left(  \tau,x,g\left(  \tau;t,x,\xi\right)  \right)
\]%
\[
I_{3}:=\tilde{R}u\left(  \tau,x,g\left(  \tau;t,x,\xi\right)  \right)
\]
and%
\[
I_{4}:=R_{0}u\left(  \tau,x,g\left(  \tau;t,x,\xi\right)  \right)  .
\]
\ 

We begin by estimating $I_{1}.$ Let us consider the change of variables
$\Xi=g\left(  0;t,x,\xi\right)  .$ Since
\[
\det\mathbf{J}\left(  g\left(  s;t,x,\xi\right)  \right)  =1,\text{ for all
}s,t,x\text{ and }\xi,
\]
($\mathbf{J}$ denotes the Jacobian matrix) the implicit function theorem imply%
\begin{equation}
\left\vert \frac{\partial\left(  \xi\right)  }{\partial\left(  \Xi\right)
}\right\vert =1. \label{t47}%
\end{equation}
Thus,
\begin{equation}
\left\Vert I_{1}\right\Vert _{L_{x,\xi}^{p}}=\left(
{\displaystyle\iint\limits_{\mathbb{R}^{2N}}}
\left\vert W_{\phi}\left(  u_{0}\right)  \left(  x,\Xi\right)  \right\vert
^{p}\left\vert \frac{\partial\left(  \xi\right)  }{\partial\left(  \Xi\right)
}\right\vert dxd\Xi\right)  ^{1/p}=\left\Vert u_{0}\right\Vert _{M^{p}}.
\label{t17}%
\end{equation}

Next, we consider $I_{2}.$ By the inversion formula (\ref{t6}), from
(\ref{t7}) we deduce%
\begin{equation}
Ru\left(  t,x,\xi\right)  =\frac{1}{\left\Vert \phi\right\Vert _{L^{2}}^{2}}%
{\displaystyle\sum\limits_{j,k=1}^{N}}
{\displaystyle\iiint\limits_{\mathbb{R}^{3N}}}
\phi_{jk}\left(  y-x\right)  Q_{jk}\left(  t,y,x\right)  \phi\left(
y-z\right)  W_{\phi}u\left(  t,z,\eta\right)  e^{iy\cdot\left(  \eta
-\xi\right)  }dz\text{\textit{\dj }}\eta dy, \label{t29}%
\end{equation}
where $\phi_{jk}\left(  x\right)  :=x_{j}x_{k}\overline{\phi\left(  x\right)
}.$ Then,
\begin{equation}
\left.
\begin{array}
[c]{c}%
I_{2}=Ru\left(  \tau,x,g\left(  \tau;t,x,\xi\right)  \right)  =\dfrac
{1}{\left\Vert \phi\right\Vert _{L^{2}}^{2}}%
{\displaystyle\sum\limits_{j,k=1}^{N}}
{\displaystyle\iiint\limits_{\mathbb{R}^{3N}}}
\phi_{jk}\left(  y-x\right) \\
\times\phi\left(  y-z\right)  Q_{jk}\left(  \tau,y,x\right)  W_{\phi}u\left(
\tau,z,\eta\right)  e^{iy\cdot\left(  \eta-g\left(  \tau;t,x,\xi\right)
\right)  }dz\text{\textit{\dj }}\eta dy.
\end{array}
\right.  \label{t8}%
\end{equation}
Let us consider $n\in\mathbb{N}$ such that $2n>N.$ Using (\ref{t11}) in the
right-hand side of (\ref{t8}) and integrating by parts we have%
\[
\left.
\begin{array}
[c]{c}%
\left\Vert I_{2}\right\Vert _{L_{x,\xi}^{p}}\\
\leq\dfrac{1}{\left\Vert \phi\right\Vert _{L^{2}}^{2}}%
{\displaystyle\sum\limits_{j,k=1}^{N}}
\left\Vert
{\displaystyle\iiint\limits_{\mathbb{R}^{3N}}}
\left\vert \left(  1-\Delta_{y}\right)  ^{n}\left(  \phi_{jk}\left(
y-x\right)  Q_{jk}\left(  \tau,y,x\right)  \phi\left(  y-z\right)  \right)
\right\vert \dfrac{\left\vert W_{\phi}u\left(  \tau,z,\eta\right)  \right\vert
}{\left\langle \eta-g\left(  \tau;t,x,\xi\right)  \right\rangle ^{2n}%
}dz\text{\textit{\dj }}\eta dy\right\Vert _{L_{x,\xi}^{p}}\\
\leq\dfrac{1}{\left\Vert \phi\right\Vert _{L^{2}}^{2}}%
{\displaystyle\sum\limits_{j,k=1}^{N}}
{\displaystyle\sum\limits_{\left\vert \beta_{1}\right\vert +\left\vert
\beta_{2}\right\vert +\left\vert \beta_{3}\right\vert \leq2n}}
\left\Vert
{\displaystyle\iiint\limits_{\mathbb{R}^{3N}}}
\left\vert \partial_{y}^{\beta_{1}}\phi_{jk}\left(  y-x\right)  \partial
_{y}^{\beta_{2}}Q_{jk}\left(  \tau,y,x\right)  \partial_{y}^{\beta_{3}}%
\phi\left(  y-z\right)  \right\vert \dfrac{\left\vert W_{\phi}u\left(
\tau,z,\eta\right)  \right\vert }{\left\langle \eta-g\left(  \tau
;t,x,\xi\right)  \right\rangle ^{2n}}dz\text{\textit{\dj }}\eta dy\right\Vert
_{L_{x,\xi}^{p}}.
\end{array}
\right.
\]
Since $\left\vert \partial_{y}^{\beta_{2}}Q_{jk}\left(  \tau,y,x\right)
\right\vert \leq C_{\beta_{2}},$ $C_{\beta_{2}}>0,$ we estimate%
\begin{equation}
\left\Vert I_{2}\right\Vert _{L_{x,\xi}^{p}}\leq\dfrac{1}{\left\Vert
\phi\right\Vert _{L^{2}}^{2}}%
{\displaystyle\sum\limits_{j,k=1}^{N}}
{\displaystyle\sum\limits_{\left\vert \beta_{1}\right\vert +\left\vert
\beta_{2}\right\vert +\left\vert \beta_{3}\right\vert \leq2n}}
\left\Vert
{\displaystyle\iiint\limits_{\mathbb{R}^{3N}}}
C_{\beta_{2}}\left\vert \partial_{y}^{\beta_{1}}\phi_{jk}\left(  y-x\right)
\right\vert \left\vert \partial_{y}^{\beta_{3}}\phi\left(  y-z\right)
\right\vert \dfrac{\left\vert W_{\phi}u\left(  \tau,z,\eta\right)  \right\vert
}{\left\langle \eta-g\left(  \tau;t,x,\xi\right)  \right\rangle ^{2n}%
}dz\text{\textit{\dj }}\eta dy\right\Vert _{L_{x,\xi}^{p}}. \label{t9}%
\end{equation}
Making the change of variables $\Xi=g\left(  \tau;t,x,\xi\right)  \ $in the
integral on the right-hand side of (\ref{t9}) and using (\ref{t47}) we obtain%
\begin{equation}
\left.
\begin{array}
[c]{c}%
\left\Vert I_{2}\right\Vert _{L_{x,\xi}^{p}}\\
\leq\dfrac{1}{\left\Vert \phi\right\Vert _{L^{2}}^{2}}%
{\displaystyle\sum\limits_{j,k=1}^{N}}
{\displaystyle\sum\limits_{\left\vert \beta_{1}\right\vert +\left\vert
\beta_{2}\right\vert +\left\vert \beta_{3}\right\vert \leq2n}}
\left(
{\displaystyle\iint\limits_{\mathbb{R}^{2N}}}
\left(
{\displaystyle\iiint\limits_{\mathbb{R}^{3N}}}
C_{\beta_{2}}\left\vert \partial_{y}^{\beta_{1}}\phi_{jk}\left(  y-x\right)
\right\vert \left\vert \partial_{y}^{\beta_{3}}\phi\left(  y-z\right)
\right\vert \dfrac{\left\vert W_{\phi}u\left(  \tau,z,\eta\right)  \right\vert
}{\left\langle \eta-\Xi\right\rangle ^{2n}}dz\text{\textit{\dj }}\eta
dy\right)  ^{p}dxd\Xi\right)  ^{1/p}.
\end{array}
\right.  \label{t13}%
\end{equation}
Then, by Young's inequality it follows%
\begin{equation}
\left.
\begin{array}
[c]{c}%
\left\Vert I_{2}\right\Vert _{L_{x,\xi}^{p}}\leq\dfrac{1}{\left\Vert
\phi\right\Vert _{L^{2}}^{2}}%
{\displaystyle\sum\limits_{j,k=1}^{N}}
{\displaystyle\sum\limits_{\left\vert \beta_{1}\right\vert +\left\vert
\beta_{2}\right\vert +\left\vert \beta_{3}\right\vert \leq2n}^{N}}
C_{\beta_{2}}\left\Vert \partial_{y}^{\beta_{1}}\phi_{jk}\right\Vert _{L^{1}%
}\left\Vert \partial_{y}^{\beta_{3}}\phi\right\Vert _{L^{1}}\left\Vert
\left\langle \cdot\right\rangle ^{-2n}\right\Vert _{L^{1}}\left\Vert u\left(
\tau,\cdot\right)  \right\Vert _{M^{p}}\\
\leq C_{T}\left\Vert u\left(  \tau,\cdot\right)  \right\Vert _{M^{p}},
\end{array}
\right.  \label{t18}%
\end{equation}
uniformly for $t\in\lbrack0,T]$ and $\tau\in\lbrack0,t].$ Since $\mathbf{V}%
_{2}$ satisfies (\ref{t23}) with $\left\vert \alpha\right\vert \geq0$,
similarly to (\ref{t18}) we prove that
\begin{equation}
\left.  \left\Vert I_{3}\right\Vert _{L_{x,\xi}^{p}}\leq C_{T}\left\Vert
u\left(  \tau,\cdot\right)  \right\Vert _{M^{p}},\right.  \label{t40}%
\end{equation}
uniformly for $t\in\lbrack0,T]$ and $\tau\in\lbrack0,t].$

At last, we estimate $I_{4}.$ Again, by (\ref{t6}), from (\ref{t10}) we have%
\[
R_{0}u\left(  x,\xi\right)  =\frac{1}{\left\Vert \phi\right\Vert _{L^{2}}^{2}}%
{\displaystyle\iiint\limits_{\mathbb{R}^{3N}}}
S\left(  x-y,\xi\right)  \phi\left(  y-z\right)  W_{\phi}u\left(
t,z,\eta\right)  e^{iy\cdot\left(  \eta-\xi\right)  }dz\text{\textit{\dj }%
}\eta dy.
\]
and then,%
\begin{equation}
\left\Vert I_{4}\right\Vert _{L_{x,\xi}^{p}}\leq\frac{1}{\left\Vert
\phi\right\Vert _{L^{2}}^{2}}\left\Vert
{\displaystyle\iiint\limits_{\mathbb{R}^{3N}}}
S\left(  x-y,g\left(  \tau;t,x,\xi\right)  \right)  \phi\left(  y-z\right)
W_{\phi}u\left(  \tau,z,\eta\right)  e^{iy\cdot\left(  \eta-g\left(
\tau;t,x,\xi\right)  \right)  }dz\text{\textit{\dj }}\eta dy\right\Vert
_{L_{x,\xi}^{p}}. \label{t12}%
\end{equation}
Using (\ref{t11}) in the right-hand side of (\ref{t12}) and integrating by
parts we get
\[
\left\Vert I_{4}\right\Vert _{L_{x,\xi}^{p}}\leq\frac{1}{\left\Vert
\phi\right\Vert _{L^{2}}^{2}}%
{\displaystyle\sum\limits_{\left\vert \beta_{1}\right\vert +\left\vert
\beta_{2}\right\vert \leq2n}}
\left\Vert
{\displaystyle\iiint\limits_{\mathbb{R}^{3N}}}
\left\vert \partial_{y}^{\beta_{1}}S\left(  x-y,g\left(  \tau;t,x,\xi\right)
\right)  \right\vert \left\vert \partial_{y}^{\beta_{2}}\phi\left(
y-z\right)  \right\vert \frac{\left\vert W_{\phi}u\left(  t,z,\eta\right)
\right\vert }{\left\langle \eta-g\left(  \tau;t,x,\xi\right)  \right\rangle
^{2n}}dz\text{\textit{\dj }}\eta dy\right\Vert _{L_{x,\xi}^{p}}.
\]
Making the change of variables $\Xi=g\left(  \tau;t,x,\xi\right)  ,$ similarly
to (\ref{t13}) we have%
\[
\left\Vert I_{4}\right\Vert _{L_{x,\xi}^{p}}\leq\frac{1}{\left\Vert
\phi\right\Vert _{L^{2}}^{2}}%
{\displaystyle\sum\limits_{\left\vert \beta_{1}\right\vert +\left\vert
\beta_{2}\right\vert \leq2n}}
\left(
{\displaystyle\iint\limits_{\mathbb{R}^{2N}}}
\left(
{\displaystyle\iiint\limits_{\mathbb{R}^{3N}}}
\left\vert \partial_{y}^{\beta_{1}}S\left(  x-y,\Xi\right)  \right\vert
\left\vert \partial_{y}^{\beta_{2}}\phi\left(  y-z\right)  \right\vert
\frac{\left\vert W_{\phi}u\left(  \tau,z,\eta\right)  \right\vert
}{\left\langle \eta-\Xi\right\rangle ^{2n}}dz\text{\textit{\dj }}\eta
dy\right)  ^{p}d\Xi dx\right)  ^{1/p}.
\]
Then, by Young's inequality we deduce%
\[
\left.
\begin{array}
[c]{c}%
\left\Vert I_{4}\right\Vert _{L_{x,\xi}^{p}}\leq\dfrac{1}{\left\Vert
\phi\right\Vert _{L^{2}}^{2}}\left\Vert \left\langle \cdot\right\rangle
^{-2n}\right\Vert _{L^{1}}\left\Vert \left\Vert W_{\phi}u\left(  \tau
,x,\xi\right)  \right\Vert _{L_{\xi}^{p}}\right\Vert _{L_{x}^{p}}%
{\displaystyle\sum\limits_{\left\vert \beta_{1}\right\vert +\left\vert
\beta_{2}\right\vert \leq2n}}
\left\Vert \left\Vert \partial_{y}^{\beta_{1}}S\left(  y,\xi\right)
\right\Vert _{L_{\xi}^{\infty}}\right\Vert _{L_{y}^{1}}\left\Vert \partial
_{y}^{\beta_{2}}\phi\left(  \cdot\right)  \right\Vert _{L^{1}}\\
\leq C\left\Vert u\left(  \tau,\cdot\right)  \right\Vert _{M^{p}}%
{\displaystyle\sum\limits_{\left\vert \beta_{1}\right\vert \leq2n}}
\left\Vert \left\Vert \partial_{y}^{\beta_{1}}S\left(  y,\xi\right)
\right\Vert _{L_{\xi}^{\infty}}\right\Vert _{L_{y}^{1}}.
\end{array}
\right.
\]
Then, using Lemma \ref{L2} we get%
\begin{equation}
\left\Vert I_{4}\right\Vert _{L_{x,\xi}^{p}}\leq C_{T}^{\left(  1\right)
}\left\Vert u\left(  \tau,\cdot\right)  \right\Vert _{M^{p}}, \label{t19}%
\end{equation}
uniformly for $t\in\lbrack0,T]$ and $\tau\in\lbrack0,t].$

Finally, using (\ref{t17}), (\ref{t18}) and (\ref{t19})\ in (\ref{t16}) we
arrive to%
\begin{equation}
\left\Vert u\left(  t,\cdot\right)  \right\Vert _{M^{p}}\leq\left\Vert
u_{0}\right\Vert _{M^{p}}+C%
{\displaystyle\int\limits_{0}^{t}}
\left\Vert u\left(  \tau,\cdot\right)  \right\Vert _{M^{p}}d\tau. \label{t21}%
\end{equation}
Applying Gronwall's lemma to (\ref{t21}), we attain (\ref{t20}). The case
$t\in\left[  -T,0\right]  $ can be considered similarly.
\end{proof}

The second assertion of Theorem \ref{TDirac} follows from:

\begin{theorem}
\label{T2}Let $1\leq p,q\leq\infty$ and $T>0.$ Set $m\in\mathbb{N}$. Let
$a\left(  \xi\right)  =\{a_{jk}\left(  \xi\right)  \}$ be an Hermitian
$\left(  m\times m\right)  $-matrix valued function such that $a_{jk}\left(
\xi\right)  \in C^{\infty}\left(  \mathbb{R}^{N}\right)  $ and the estimate
(\ref{t46}) is valid. Suppose that $\mathbf{V}\left(  t,x\right)
=\mathbf{V}_{1}\left(  t,x\right)  +\mathbf{V}_{2}\left(  t,x\right)  ,$ where
$\mathbf{V}_{1}\left(  t,x\right)  =\{\left(  \mathbf{V}_{1}\right)
_{jk}\left(  t,x\right)  \}$ is a matrix-valed function with $Q_{jk}\left(
t,x\right)  \in C^{\infty}\left(  \mathbb{R}^{N}\times\mathbb{R}^{N}\right)  $
that satisfies (\ref{t23}) for $\left\vert \alpha\right\vert \geq1.$ Moreover,
let $\mathbf{V}_{2}\left(  t,x\right)  \in C^{\infty}\left(  \mathbb{R}%
^{N}\times\mathbb{R}^{N}\right)  $ be a matrix-valued function that verifies
(\ref{t23}) for $\left\vert \alpha\right\vert \geq0.$ Then, the solution
$u\left(  t,x\right)  $ of (\ref{eq1})
\begin{equation}
\left\Vert u\left(  t,\cdot\right)  \right\Vert _{M^{p,q}}\leq C_{T}\left\Vert
u_{0}\right\Vert _{M^{p,q}}, \label{t22}%
\end{equation}
uniformly for $t\in\lbrack-T,T].$
\end{theorem}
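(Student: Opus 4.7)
The proof follows the same blueprint as that of Theorem \ref{T1}, with two modifications dictated by the weaker assumption on $\mathbf{V}_{1}$ (derivatives bounded only for $|\alpha|\geq 1$) and by its matrix-valued character. Since (\ref{t23}) is now assumed only for $|\alpha|\geq 1$, the Taylor expansion of $\mathbf{V}_{1}$ is truncated at first order,
\[
\mathbf{V}_{1}(t,y)=\mathbf{V}_{1}(t,x)+\sum_{j=1}^{N}(y_{j}-x_{j})M_{j}(t,y,x),\qquad M_{j}(t,y,x)=\int_{0}^{1}(\partial_{x_{j}}\mathbf{V}_{1})(t,x+\theta(y-x))\,d\theta,
\]
with each $M_{j}$ matrix-valued and enjoying bounded derivatives of all orders in $y$. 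The crucial consequence is that the transport drift $\nabla Q(t,x)\cdot\nabla_{\xi}$ of (\ref{t3}) is \emph{absent}, so the characteristic $g(s;t,x,\xi)$ degenerates to $\xi$. Precisely this simplification is what permits passing from the diagonal norm $M^{p,p}$ to the mixed norm $M^{p,q}$.

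Combining the expansion above with the decomposition (\ref{t1}) for $\mathbf{a}(D)u$ transforms (\ref{eq1}) into
\[
i\partial_{t}W_{\phi}u(t,x,\xi)=\bigl(a(\xi)+\mathbf{V}_{1}(t,x)\bigr)W_{\phi}u(t,x,\xi)+R_{0}u(t,x,\xi)+R_{1}u(t,x,\xi)+\tilde{R}u(t,x,\xi),
\]
where $R_{0}u$ is as in (\ref{t10}), $\tilde{R}u$ as in (\ref{t3}), and $R_{1}u$ is the first-order Taylor remainder, of the same structure as (\ref{t7}) but with a single factor $(y_{j}-x_{j})$. Since $a(\xi)$ is Hermitian by hypothesis and $\mathbf{V}_{1}$ is Hermitian (as noted in the remark following Theorem \ref{TDirac}), the matrix ODE generated by $-i(a(\xi)+\mathbf{V}_{1}(t,x))$ admits a pointwise unitary propagator $U(t,\tau;x,\xi)$ on $\mathbb{C}^{m}$, and Duhamel's principle gives
\[
W_{\phi}u(t,x,\xi)=U(t,0;x,\xi)W_{\phi}u_{0}(x,\xi)-i\int_{0}^{t}U(t,\tau;x,\xi)\bigl(R_{0}u+R_{1}u+\tilde{R}u\bigr)(\tau,x,\xi)\,d\tau.
\]
Pointwise unitarity of $U$ in $\mathbb{C}^{m}$ implies that the $L^{p,q}_{x,\xi}$-norm of the first term equals exactly $\|u_{0}\|_{M^{p,q}}$, with no change of variables required.

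It then remains to show that each of $\|R_{0}u(\tau,\cdot)\|_{M^{p,q}}$, $\|R_{1}u(\tau,\cdot)\|_{M^{p,q}}$, and $\|\tilde{R}u(\tau,\cdot)\|_{M^{p,q}}$ is bounded by $C_{T}\|u(\tau,\cdot)\|_{M^{p,q}}$, after which Gronwall's inequality closes the argument exactly as in the passage from (\ref{t21}) to (\ref{t20}). Invoking the inversion formula (\ref{t6}) and integrating by parts $2n$ times via (\ref{t11}), each remainder is dominated by convolution against a kernel of tensor-product form $K(x,\xi)=f(x)\langle\xi\rangle^{-2n}$ with $f\in L^{1}(\mathbb{R}^{N})$. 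Iterating Young's inequality first in $\xi$ and then in $x$ yields the mixed-norm bound $\|K\ast v\|_{L^{p}_{x}L^{q}_{\xi}}\leq\|f\|_{L^{1}}\|\langle\cdot\rangle^{-2n}\|_{L^{1}}\|v\|_{L^{p}_{x}L^{q}_{\xi}}$ for any $1\leq p,q\leq\infty$. The term $R_{0}u$ is handled as $I_{4}$ in the proof of Theorem \ref{T1}, with Lemma \ref{L2} providing the required integrable control of the kernel $S$; the terms $R_{1}u$ and $\tilde{R}u$ mirror $I_{2}$ and $I_{3}$ respectively.

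The main obstacle is ensuring that $U(t,\tau;x,\xi)$ is uniformly bounded in the unbounded variable $x$: since $\mathbf{V}_{1}$ may grow linearly in $x$, a non-Hermitian lower-order term would produce a propagator whose operator norm could blow up with $|x|$, destroying any uniform modulation estimate. The Hermitian hypothesis on $\mathbf{V}_{1}$ (together with that on $a$) is precisely what makes $U$ unitary pointwise and bypasses this difficulty. By contrast, the passage from $M^{p,p}$ in Theorem \ref{T1} to $M^{p,q}$ here is actually easier, because the absence of the $\xi$-drift removes the $x$-dependent change of variables $\Xi=g(\tau;t,x,\xi)$ that, in the proof of Theorem \ref{T1}, intertwined $x$ and $\Xi$ and forced $p=q$.
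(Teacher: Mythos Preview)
Your proof is correct and follows the same overall architecture as the paper: first-order Taylor expansion of $\mathbf{V}_{1}$, transformation of the equation for $W_{\phi}u$ into an ODE with frozen $(x,\xi)$, Duhamel representation, estimation of the three remainders $R_{0}u$, $R_{1}u$, $\tilde{R}u$ via the inversion formula, integration by parts through (\ref{t11}), and closure by Gronwall. Two points of difference are worth recording. First, where the paper writes the propagator somewhat informally as $e^{-ita(\xi)-i\int_{0}^{t}\mathbf{V}_{1}(s,x)\,ds}$ (despite the fact that $a(\xi)$ and $\mathbf{V}_{1}(s,x)$ need not commute), you correctly invoke the time-ordered unitary propagator $U(t,\tau;x,\xi)$; both arguments rely on the Hermitian hypothesis to obtain pointwise unitarity, and you make this dependence explicit. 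Second, and more substantively, the paper establishes the remainder bounds only in the three endpoint spaces $M^{\infty,1}$, $M^{1,\infty}$, $M^{p,p}$ and then appeals to complex interpolation (Proposition~\ref{PropInterpolation}) to reach the general $M^{p,q}$ estimate, whereas you observe that after integrating out the auxiliary variable $y$ each remainder is dominated by a convolution with a tensor-product kernel $f(x)\langle\xi\rangle^{-2n}$, so that iterated Young (Minkowski in $\xi$, then Young in $x$) gives the $L^{p}_{x}L^{q}_{\xi}$ bound directly. Your route is shorter and avoids the interpolation machinery; the paper's route has the mild advantage of isolating the endpoint cases, which can be of independent interest.
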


\begin{proof}
We consider the case $t\in\left[  0,T\right]  .$ We expand the potential
$\boldsymbol{V}_{1}$ as
\[
\boldsymbol{V}_{1}\left(  t,y\right)  =\boldsymbol{V}_{1}\left(  t,x\right)
+\sum_{k=1}^{N}\left(  y_{k}-x_{k}\right)  \boldsymbol{V}_{k}\left(
t,y,x\right)  ,
\]
where%
\[
\boldsymbol{V}_{k}\left(  t,y,x\right)  :=%
{\displaystyle\int\limits_{0}^{1}}
\partial_{x_{k}}\boldsymbol{V}_{1}\left(  t,x+\theta\left(  y-x\right)
\right)  d\theta.
\]
Then,
\begin{equation}
\left.
\begin{array}
[c]{c}%
W_{\phi}\left(  qu\right)  \left(  t,x,\xi\right)  =\boldsymbol{V}_{1}\left(
t,x\right)  W_{\phi}u\left(  t,x,\xi\right)  +R_{1}u\left(  t,x,\xi\right)  ,
\end{array}
\right.  \label{t24}%
\end{equation}
where%
\[
R_{1}u\left(  t,x,\xi\right)  =\sum_{k=1}^{N}%
{\displaystyle\int\limits_{\mathbb{R}^{N}}}
e^{-iy\cdot\xi}\tilde{\phi}_{k}\left(  y-x\right)  \boldsymbol{V}_{k}\left(
t,y,x\right)  u\left(  t,y\right)  dy,
\]
and $\tilde{\phi}_{k:}\left(  y\right)  :=y_{k}\overline{\phi\left(  y\right)
}.$ By (\ref{t1}) and (\ref{t24}) we transform equation (\ref{eq1}) into
\begin{equation}
\left\{
\begin{array}
[c]{c}%
\left(  i\partial_{t}-\boldsymbol{V}_{1}\left(  t,x\right)  -a\left(
\xi\right)  \right)  W_{\phi}u\left(  t,x,\xi\right)  =R_{1}u\left(
x,\xi\right)  +\tilde{R}u\left(  x,\xi\right)  +R_{0}u\left(  x,\xi\right)
,\\
W_{\phi}\left(  u\right)  \left(  0,x,\xi\right)  =W_{\phi}\left(
u_{0}\right)  \left(  x,\xi\right)  .
\end{array}
\right.  \label{t25}%
\end{equation}
Thus, the solution of (\ref{t25}) is given by%
\[
W_{\phi}u\left(  t,x,\xi\right)  =e^{-ita\left(  \xi\right)  -i%
{\textstyle\int_{0}^{t}}
\boldsymbol{V}_{1}\left(  s,x\right)  ds}\left(  W_{\phi}\left(  u_{0}\right)
\left(  x,\xi\right)  -i%
{\displaystyle\int\limits_{0}^{t}}
e^{-i\tau a\left(  \xi\right)  -i%
{\textstyle\int_{0}^{\tau}}
\boldsymbol{V}_{1}\left(  s,x\right)  ds}\left(  R_{1}u+\tilde{R}%
u+R_{0}u\right)  \left(  \tau,x,\xi\right)  d\tau\right)  .
\]
Therefore,%
\begin{equation}
\left\vert W_{\phi}u\left(  t,x,\xi\right)  \right\vert \leq\left\vert
\tilde{I}_{1}\right\vert +%
{\displaystyle\int\limits_{0}^{t}}
\left(  \left\vert \tilde{I}_{2}\right\vert +\left\vert \tilde{I}%
_{3}\right\vert +\left\vert \tilde{I}_{4}\right\vert \right)  d\tau,
\label{t36}%
\end{equation}
where $\tilde{I}_{1}:=W_{\phi}\left(  u_{0}\right)  \left(  x,\xi\right)  ,$
$\tilde{I}_{2}:=R_{1}u\left(  \tau,x,\xi\right)  ,$ $\tilde{I}_{3}:=\tilde
{R}u\left(  \tau,x,\xi\right)  $ and $\tilde{I}_{4}:=R_{0}u\left(  \tau
,x,\xi\right)  .$ Let us estimate the norms $\left\Vert \tilde{I}%
_{j}\right\Vert _{M_{\phi}^{\infty,1}},$ $\left\Vert \tilde{I}_{j}\right\Vert
_{M_{\phi}^{1,\infty}}$ and $\left\Vert \tilde{I}_{j}\right\Vert _{M_{\phi
}^{p,p}},$ $1\leq p\leq\infty,$ for $j=2,3,4.$ By the inversion formula for
the wave-packet transform we have%
\begin{equation}
R_{1}u\left(  t,x,\xi\right)  =\frac{1}{\left\Vert \phi\right\Vert _{L^{2}%
}^{2}}%
{\displaystyle\sum\limits_{j,k=1}^{N}}
{\displaystyle\iiint\limits_{\mathbb{R}^{3N}}}
\tilde{\phi}_{k}\left(  y-x\right)  \boldsymbol{V}_{k}\left(  t,y,x\right)
\phi\left(  y-z\right)  W_{\phi}u\left(  t,z,\eta\right)  e^{iy\cdot\left(
\eta-\xi\right)  }dzd\eta dy. \label{t27}%
\end{equation}
Taking into account (\ref{t11}), integrating by parts in the right-hand side
of (\ref{t27}) and using (\ref{t23}) we estimate
\begin{equation}
\left\vert \tilde{I}_{2}\right\vert \leq\frac{1}{\left\Vert \phi\right\Vert
_{L^{2}}^{2}}%
{\displaystyle\sum\limits_{j,k=1}^{N}}
{\displaystyle\sum\limits_{\left\vert \beta_{1}\right\vert +\left\vert
\beta_{2}\right\vert +\left\vert \beta_{3}\right\vert \leq2n}}
C_{\beta_{2}}%
{\displaystyle\iiint\limits_{\mathbb{R}^{3N}}}
\left\vert \partial_{y}^{\beta_{1}}\tilde{\phi}_{k}\left(  y\right)
\right\vert \left\vert \partial_{y}^{\beta_{3}}\phi\left(  y+\left(
x-z\right)  \right)  \right\vert \frac{\left\vert W_{\phi}u\left(
t,z,\eta\right)  \right\vert }{\left\langle \eta-\xi\right\rangle ^{2n}%
}dzd\eta dy. \label{t28}%
\end{equation}
Then, by Young's inequality%
\begin{equation}
\left.
\begin{array}
[c]{c}%
\left\Vert \left\Vert \tilde{I}_{2}\right\Vert _{L_{x}^{\infty}}\right\Vert
_{L_{\xi}^{1}}\leq\frac{1}{\left\Vert \phi\right\Vert _{L^{2}}^{2}}\left\Vert
\left\langle \cdot\right\rangle ^{-2n}\right\Vert _{L^{1}}\left\Vert
\left\Vert W_{\phi}u\left(  t,x,\xi\right)  \right\Vert _{L_{x}^{\infty}%
}\right\Vert _{L_{\xi}^{1}}%
{\displaystyle\sum\limits_{j,k=1}^{N}}
{\displaystyle\sum\limits_{\left\vert \beta_{1}\right\vert +\left\vert
\beta_{2}\right\vert +\left\vert \beta_{3}\right\vert \leq2n}}
C_{\beta_{2}}\left\Vert \partial_{y}^{\beta_{1}}\tilde{\phi}_{k}\right\Vert
_{L^{1}}\left\Vert \partial_{y}^{\beta_{3}}\phi\right\Vert _{L^{1}}\\
\leq C_{T}\left\Vert u\left(  \tau,\cdot\right)  \right\Vert _{M_{\phi
}^{\infty,1}},
\end{array}
\right.  \label{t30}%
\end{equation}%
\begin{equation}
\left\Vert \left\Vert \tilde{I}_{2}\right\Vert _{L_{x}^{1}}\right\Vert
_{L_{\xi}^{\infty}}\leq C_{T}\left\Vert u\left(  \tau,\cdot\right)
\right\Vert _{M_{\phi}^{1,\infty}} \label{t31}%
\end{equation}
and%
\begin{equation}
\left\Vert \left\Vert \tilde{I}_{2}\right\Vert _{L_{x}^{p}}\right\Vert
_{L_{\xi}^{p}}\leq C_{T}\left\Vert u\left(  \tau,\cdot\right)  \right\Vert
_{M_{\phi}^{p,p}}, \label{t32}%
\end{equation}
uniformly for $t\in\lbrack0,T]$ and $\tau\in\lbrack0,t].$ Since $\mathbf{V}%
_{2}$ satisfies (\ref{t23}) with $\left\vert \alpha\right\vert \geq0$,
similarly to (\ref{t30})-(\ref{t32}) we prove that
\begin{equation}
\left.  \left\Vert \left\Vert \tilde{I}_{3}\right\Vert _{L_{x}^{\infty}%
}\right\Vert _{L_{\xi}^{1}}\leq C_{T}\left\Vert u\left(  \tau,\cdot\right)
\right\Vert _{M_{\phi}^{\infty,1}},\right.
\end{equation}%
\begin{equation}
\left\Vert \left\Vert \tilde{I}_{3}\right\Vert _{L_{x}^{1}}\right\Vert
_{L_{\xi}^{\infty}}\leq C_{T}\left\Vert u\left(  \tau,\cdot\right)
\right\Vert _{M_{\phi}^{1,\infty}}%
\end{equation}
and%
\begin{equation}
\left\Vert \left\Vert \tilde{I}_{3}\right\Vert _{L_{x}^{p}}\right\Vert
_{L_{\xi}^{p}}\leq C_{T}\left\Vert u\left(  \tau,\cdot\right)  \right\Vert
_{M_{\phi}^{p,p}}, \label{t44}%
\end{equation}
uniformly for $t\in\lbrack0,T]$ and $\tau\in\lbrack0,t].$ Next, using
(\ref{t11}) and integrating by parts in the right-hand side of (\ref{t29}) we
have%
\[
\left\vert \tilde{I}_{4}\right\vert \leq\frac{1}{\left\Vert \phi\right\Vert
_{L^{2}}^{2}}%
{\displaystyle\sum\limits_{\left\vert \beta_{1}\right\vert +\left\vert
\beta_{2}\right\vert \leq2n}}
{\displaystyle\iiint\limits_{\mathbb{R}^{3N}}}
\left\vert \partial_{y}^{\beta_{1}}S\left(  x-y,\xi\right)  \right\vert
\left\vert \partial_{y}^{\beta_{2}}\phi\left(  y-z\right)  \right\vert
\frac{\left\vert W_{\phi}u\left(  t,z,\eta\right)  \right\vert }{\left\langle
\eta-\xi\right\rangle ^{2n}}dzd\eta dy.
\]
Moreover, using Lemma \ref{L2} we get%
\[
\left\vert \tilde{I}_{4}\right\vert \leq\frac{C}{\left\Vert \phi\right\Vert
_{L^{2}}^{2}}%
{\displaystyle\sum\limits_{\left\vert \beta_{1}\right\vert +\left\vert
\beta_{2}\right\vert \leq2n}}
{\displaystyle\iiint\limits_{\mathbb{R}^{3N}}}
\left\vert \left\langle x-y\right\rangle ^{-2n}\right\vert \left\vert
\partial_{y}^{\beta_{2}}\phi\left(  y-z\right)  \right\vert \frac{\left\vert
W_{\phi}u\left(  t,z,\eta\right)  \right\vert }{\left\langle \eta
-\xi\right\rangle ^{2n}}dzd\eta dy.
\]
Then, by Young's inequality%
\begin{equation}
\left.  \left\Vert \left\Vert \tilde{I}_{4}\right\Vert _{L_{x}^{\infty}%
}\right\Vert _{L_{\xi}^{1}}\leq C_{T}\left\Vert u\left(  \tau,\cdot\right)
\right\Vert _{M_{\phi}^{\infty,1}},\right.  \label{t33}%
\end{equation}
and%
\begin{equation}
\left\Vert \left\Vert \tilde{I}_{4}\right\Vert _{L_{x}^{1}}\right\Vert
_{L_{\xi}^{\infty}}\leq C_{T}\left\Vert u\left(  \tau,\cdot\right)
\right\Vert _{M_{\phi}^{1,\infty}}. \label{t34}%
\end{equation}
and%
\begin{equation}
\left\Vert \left\Vert \tilde{I}_{4}\right\Vert _{L_{x}^{p}}\right\Vert
_{L_{\xi}^{p}}\leq C_{T}\left\Vert u\left(  \tau,\cdot\right)  \right\Vert
_{M_{\phi}^{p,p}}, \label{t42}%
\end{equation}
uniformly for $t\in\lbrack0,T]$ and $\tau\in\lbrack0,t].$ Using estimates
(\ref{t30})-(\ref{t44}), (\ref{t33})-(\ref{t42}) in (\ref{t36}) we get%
\[
\left\Vert u\left(  \tau,\cdot\right)  \right\Vert _{M_{\phi}^{\infty,1}}\leq
C\left\Vert u_{0}\right\Vert _{M_{\phi}^{\infty,1}}+C%
{\displaystyle\int\limits_{0}^{t}}
\left\Vert u\left(  \tau,\cdot\right)  \right\Vert _{M_{\phi}^{\infty,1}}%
d\tau,
\]%
\[
\left\Vert u\left(  \tau,\cdot\right)  \right\Vert _{M_{\phi}^{1,\infty}}\leq
C\left\Vert u_{0}\right\Vert _{M_{\phi}^{1,\infty}}+C%
{\displaystyle\int\limits_{0}^{t}}
\left\Vert u\left(  \tau,\cdot\right)  \right\Vert _{M_{\phi}^{1,\infty}}%
d\tau,
\]
and%
\[
\left\Vert u\left(  \tau,\cdot\right)  \right\Vert _{M_{\phi}^{p,p}}\leq
C\left\Vert u_{0}\right\Vert _{M_{\phi}^{p,p}}+C%
{\displaystyle\int\limits_{0}^{t}}
\left\Vert u\left(  \tau,\cdot\right)  \right\Vert _{M^{p}}d\tau.
\]
Then, Gronwall's lemma yields
\begin{equation}
\left\Vert u\left(  \tau,\cdot\right)  \right\Vert _{M_{\phi}^{\infty,1}}\leq
C\left\Vert u_{0}\right\Vert _{M_{\phi}^{\infty,1}}, \label{t37}%
\end{equation}%
\begin{equation}
\left\Vert u\left(  \tau,\cdot\right)  \right\Vert _{M_{\phi}^{1,\infty}}\leq
C\left\Vert u_{0}\right\Vert _{M_{\phi}^{1,\infty}}, \label{t38}%
\end{equation}
and%
\begin{equation}
\left\Vert u\left(  \tau,\cdot\right)  \right\Vert _{M_{\phi}^{p,p}}\leq
C\left\Vert u_{0}\right\Vert _{M_{\phi}^{p,p}}. \label{t39}%
\end{equation}
Estimate (\ref{t22}) follows from (\ref{t37})-(\ref{t39}) by the complex
interpolation theorem for modulation spaces (see Proposition
\ref{PropInterpolation}). The case $t\in\left[  -T,0\right]  $ can be
considered similarly.
\end{proof}

\end{document}